\documentclass[hidelinks,onefignum,onetabnum]{siamart251216}

\pdfoutput=1

\usepackage{lipsum}
\usepackage{amsfonts}
\usepackage{graphicx}
\usepackage{epstopdf}
\usepackage{algorithmic}
\ifpdf
  \DeclareGraphicsExtensions{.eps,.pdf,.png,.jpg}
\else
  \DeclareGraphicsExtensions{.eps}
\fi

\usepackage{amssymb}
\usepackage{amsmath}
\allowdisplaybreaks
\usepackage{cancel}
\usepackage{graphicx} 
\usepackage{animate} 
\usepackage{subcaption}
\usepackage[export]{adjustbox}
\usepackage{xcolor}

\newsiamremark{remark}{Remark}
\newsiamremark{hypothesis}{Hypothesis}
\crefname{hypothesis}{Hypothesis}{Hypotheses}
\newsiamthm{claim}{Claim}
\newsiamremark{example}{Example}
\newsiamremark{examples}{Examples}
\newsiamremark{case}{Case}
\newsiamthm{assumption}{Assumption}
\newsiamremark{fact}{Fact}
\crefname{fact}{Fact}{Facts}

\headers{Neural SHP with Boundary Conditions on Graphs}{Yaxin Feng, Yang Xiang, and Haomin, Zhou}

\title{A Parameterized Stochastic Hamiltonian Process on Graphs with Two-Point Boundary Conditions
}

\author{Yaxin Feng\thanks{Department of Mathematics, The Hong Kong University of Science and Technology,
Hong Kong SAR, China (\email{yfengba@connect.ust.hk}, \email{maxiang@ust.hk}).}
\and Yang Xiang\footnotemark[1]\textsuperscript{ }\thanks{HKUST Shenzhen-Hong Kong Collaborative Innovation Research Institute, Futian, Shenzhen, China.}
\and Haomin Zhou\thanks{School of Mathematics, Georgia Institute of Technology, GA 30332 USA (\email{hmzhou@math.gatech.edu}).}
}

\usepackage{amsopn}

\usepackage{booktabs}

\ifpdf
\hypersetup{
 pdftitle={Neural Parameterized Mean-Field Controls on Graphs: A Stochastic Hamiltonian Process with Two-Point Boundary Conditions},
 pdfauthor={Yaxin Feng, Yang Xiang, and Haomin, Zhou}
}
\fi

\IfFileExists{ex_supplement.aux}{%
 \externaldocument[][nocite]{ex_supplement}%
}{}

\begin{document}

\maketitle



\begin{abstract}
This paper presents a Lagrangian framework to solve Wasserstein Hamiltonian flows (WHF) under two-point boundary conditions on large finite graphs. Characterized by a coupled system of discrete master equation and Hamilton-Jacobi (HJ) equation on the probability simplex, this formulation defines the macroscopic WHF. While classical density-based solvers can track this flow, they face severe memory and computational bottlenecks as the graph scale grows. To bypass these limitations, we introduce the stochastic Hamiltonian process (SHP), a sample-based mathematical framework that tracks individual particle dynamics rather than the full node-wise density. 
We parameterize the time-dependent potential using neural network, yielding the Neural SHP solver. Numerical experiments on synthetic graphs demonstrate the scalability of our method relative to density-based baselines. Furthermore, we validate Neural SHP across two large-scale real-world networks, a PubMed citation network and a massive road network, demonstrating its robust efficiency for graph partition and routing problems on complex graph structures.

\end{abstract}

\begin{keywords}
stochastic Hamiltonian process, large-scale graphs, Lagrangian formulation, particle methods
\end{keywords}


\section{Introduction}

In this work, we investigate stochastic Hamiltonian process (SHP) on finite graphs within Wasserstein geometry, focusing primarily on the microscopic stochastic process $X(t)$ with density $\rho(t)$. This process governs particle trajectories whose transition rates $Q(t)$ are driven by the graph gradient of a potential field $S(t)$. Macroscopically, the transport of the global probability density can be formally characterized by a deterministic Wasserstein Hamiltonian Flow (WHF)~\cite{chow2020wasserstein}, which is given by a coupled system govening by the density $\rho$ and potential $S$:
\begin{equation}\label{eq:WHs0}
\begin{pmatrix}
\frac{d \rho}{dt}\\[1mm]
\frac{d S}{dt}
\end{pmatrix}
=
\begin{pmatrix}
0&\mathbf{I}_n\\
-\mathbf{I}_n&0
\end{pmatrix}
\begin{pmatrix}
\frac{\partial\mathcal H}{\partial \rho}\\[1mm]
\frac{\partial\mathcal H}{\partial S}
\end{pmatrix},
\end{equation}
where $\mathcal{H}(\rho,S)$ is the Hamiltonian on an $n$-node graph $\mathsf{G}$. Here, $\rho$ and $S$ are vector-valued functions defined on the node set $V$, and $\mathbf{I}_n$ is the $n \times n$ identity matrix.
While directly solving this macroscopic WHF becomes computationally intractable on large-scale graphs due to the massive system of coupled equations, focusing on the particle-level process in the Lagrangian viewpoint bypasses these global bottlenecks while strictly preserving the underlying Hamiltonian structure.

Wasserstein dynamics on finite graphs have been studied extensively for both dissipative and conservative flows. Under this geometric framework, graph-based Fokker-Planck equations can be interpreted as free-energy gradient flows or Kolmogorov forward equations for Markov processes~\cite{chow2012fokker,chow2017entropy}, building upon discrete optimal transport theories established in~\cite{maas2011gradient,mielke2011gradient,mielke2014relation}. Recently, optimizing and controlling transition rates in continuous-time Markov chain (CTMC) has extended to discrete generative and diffusion frameworks~\cite{campbell2023continuous,benton2024denoising}. We move beyond these purely dissipative regimes to WHF which provides a natural geometric formulation to model conservative and coupled dynamics on discrete graphs.

Depending on the specific variational formulation or optimal control problem, the WHF can be configured with distinct initial and boundary conditions. For instance, discrete mean-field game problems on finite states or graphs are constrained by an initial density $\rho(0)$ and a terminal condition $S(T)$~\cite{gomes2010discrete,gomes2013continuous,luo2023deep,feng2026discrete}. Stochastic optimal transport and Schrödinger bridge problems specify fixed initial and terminal density conditions $\rho(0)$ and $\rho(T)$ \cite{chow2022dynamical,theodoropoulos2026generalized}, whereas the discrete Schrödinger equation requires initial conditions on both $\rho(0)$ and $S(0)$ \cite{chow2019discrete,wu2026parameterized}. 

Beyond these classical settings, WHF with two-point boundary conditions are increasingly applied to model complex dynamics directly on static finite graphs, representing the fixed, discrete topology of real-world networks. These real-world network applications include routing and traffic flows \cite{camilli2016stationary,achdou2024deterministic,carlier2008optimal}, the propagation of opinions across social networks \cite{bauso2016opinion,fabian2024learning,yang2018learning}, and evolution of chemical reaction networks \cite{maas2020modeling}. However, most existing studies remain density-based, reflecting an Eulerian formulation that becomes prohibitively expensive in memory and computation on large-scale graphs. This scalability bottleneck heavily motivates our Lagrangian particle approach.

In this work, we propose a parameterized computational framework for SHP with two-point boundary value conditions on a graph. 
This framework is constructed using a Lagrangian formulation of the SHP, together with a neural network parameterization implemented through particle trajectories. It elliminates the scalibility issue on large-scale graph encountered by the density-based methods. 
More precisely, we consider~\cref{eq:WHs0}, in which the Hamiltonian is defined by 
\begin{equation}\label{eq:Hamiltonian}
  \mathcal{H}(\rho,S)=\mathcal K(\rho,S) + \mathcal F(\rho),
\end{equation}  
where $\mathcal{K}(\rho,S)$ is the kinetic energy, $\mathcal{F}(\rho)$ is the potential.
The boundary condition are given by $\rho(0)=\rho_0$ and $S(T)=-\frac{\partial \mathcal{G}}{\partial \rho}(T),$ where $\mathcal{G}(\rho(T))$ is the terminal potential. 
This is a forward-backward structured system.
Solving a coupled forward-backward system is, in general, numerically challenging. We overcome this by minimizing a Markov-process-constrained action integral functional (hereafter, the action functional), which efficiently manages the forward-backward dynamics as detailed in~\cref{sec:2ptBdySHP}.

Here are some examples that can be handled by our proposed SHP framework.

\begin{example}[Mean-Field Games on Graphs]\label{ex:MFG}

A mean-field game on a graph (Graph MFG) describes indistinguishable, noncooperative agents interacting through their population distribution~\cite{gomes2013continuous,cecchin2020probabilistic,feng2026discrete,bayraktar2018analysis,achdou2024deterministic,cecchin2019convergence}. The WHF~\cref{eq:WHs0} with two-point boundary conditions can be used to describe the Nash equilibrium of a mean-field game. When the running cost $f_i$ and terminal cost $g_i$ of an agent at node $i\in V$ are defined as the first variations of the population potentials $\mathcal{F}$ and $\mathcal{G}$,
$$
f_i(\rho)=\frac{\partial \mathcal{F}}{\partial \rho_i},
\qquad
g_i(\rho(T))=\frac{\partial \mathcal{G}}{\partial \rho_i}(T),
\quad i\in V,$$
the system corresponds to a variational mean-field game on graphs, which can be solved by minimizing the constrained action functional~\cref{eq:ActionFunctional}.
\end{example}

\begin{example}[Mean-Field Control on Graphs]\label{ex:MFCs}

Mean-field control on a graph (Graph MFC) describes a cooperative population whose common strategy is chosen by a central planner~\cite{fabian2025learning,dan2026variational}. The WHF~\cref{eq:WHs0} with two-point boundary conditions is the first-order optimality condition of this kind of problem. Suppose that the population-dependent running and terminal costs of an agent at node $i\in V$ are $F_i(\rho)$ and $G_i(\rho(T))$, respectively. The population potentials are
$$
\mathcal{F}(\rho)= \mathbb{E} [F(\rho)] = \sum_{k\in V} F_k(\rho)\rho_k,
\quad
\mathcal{G}(\rho(T))=\mathbb{E}[G(\rho(T))]=\sum_{k\in V} {\rho_k(T)}G_k(\rho(T)).
$$
The central planner minimizes the constrained action functional~\eqref{eq:ActionFunctional} subject to these potentials.
Here we have 
$$
\frac{\partial \mathcal{F}}{\partial \rho_i}
= \sum_{k\in V}  \left( \frac{\partial \rho_k}{\partial \rho_i} F_k(\rho) + \rho_k \frac{\partial F_k(\rho)}{\partial \rho_i} \right)
= F_i(\rho) + \sum_{k\in V} \rho_k \frac{\partial F_k(\rho)}{\partial \rho_i}
$$ and similar to $\mathcal{G}$. 
\end{example}

\begin{remark}[Relation to Optimal Transport on Graphs]
When $\mathcal F\equiv0$, the MFC problem minimizes the sum of
the kinetic energy $\mathcal K$ and the terminal condition $\mathcal G$.
In particular, 
given the initial condition $\rho(0)=\rho_0$, and the terminal condition $\mathcal G(\rho(T))=0\text{ }(\text{when } \rho(T)=\rho_T) \text{ or } +\infty\text{ }(\text{when } \rho(T)\neq\rho_T)$ is taken to enforce the terminal constraint $\rho(T)=\rho_T$ and recover the dynamical optimal transport on graphs.
\end{remark}



The paper is organized as follows. We introduce the time-inhomogeneous Markov process and the SHP on finite graphs in~\cref{sec:GMFG}, and formulate the SHP with two-point boundary conditions alongside the parameterized action functional in~\cref{sec:2ptBdySHP}. Algorithmic details, complexity, and implementation are discussed in~\cref{sec:methodology}, followed by numerical validation in~\cref{sec:experiments}.

\section{Stochastic Hamiltonian Process on Finite Graphs}
\label{sec:GMFG}
In this section, we review the definitions and theorems of~\cite{cui2021stochastic} and tailor them to our problem.

We consider an undirected, connected graph $\mathsf{G}=(V,E,\mathbf{W})$ with no self-loops or multiple edges~\cite{gross2003handbook,feng2026discrete}. Let $V=\{s_i\}_{i=1}^n$ denote the node set, where $n:=\vert{}V\vert{}$ is the number of nodes. The edge between nodes $s_i$ and $s_j$ is denoted by $(i,j) \in E$, with $N(i)=\{s_j \in V : (i,j) \in E\}$ representing the neighbor set of $s_i$ for each $i \in \{1, \dots, n\}$. In addition, $w_{ij} \in \mathbf{W}$ is the symmetric weight of the edge $(i,j) \in E$ such that $w_{ij} = w_{ji} > 0$. The maximum degree of $\mathsf{G}$ is given by $d_\text{max} := \max_{i \in V} \vert{}N(i)\vert{}$.

The probability set (simplex) supported on the finite graph $\mathsf{G}$ is defined as 
$$\mathcal P(\mathsf{G}):=\left\{\rho=(\rho_i)_{i\in V}\in\mathbb R^n\;\middle|\;\rho_i\geq0\ \text{for all }i\in V,
\quad\sum_{i\in V}\rho_i=1\right\}.
$$ where $\rho_i$ is the probability on the node $i$. The interior of $\mathcal{P}(\mathsf{G})$ is denoted by $\mathcal{P}_o(\mathsf{G})$, in which $\rho_i > 0$ on each node.

The vector field $v_{ij}=-v_{ji}$ on graph $\mathsf{G}$ is a $n\times n$ skew-symmetric matrix defined on the edge set $E$.

Given two vector fields $v$ and $\tilde{v}$, the discrete inner product of the vector fields on the finite graph $\mathsf{G}$ and $\rho \in \mathcal{P}(\mathsf{G})$ is $$\langle v,\tilde{v}\rangle_{c(\rho)}=\frac{1}{2}\sum_{(i,j) \in E}v_{ij}\tilde{v}_{ij}c_{ij}(\rho),$$ where $c_{ij}(\rho)$ being the weighted function of $\rho_i$ and $\rho_j$, and the factor $\frac{1}{2}$ is needed because all the values defined on the edges are counted twice. In this work, we apply the upwind density to ensure its positive: $c_{ij}^U(\rho) = \rho_i \mathbb{I}_{\{S_j > S_i\}} + \rho_j \mathbb{I}_{\{S_i > S_j\}}$ where $\mathbb{I}(\cdot)$ is the indicator function. We have $\langle v,v\rangle_{c(\rho)}=\frac{1}{2}\sum_{(i,j) \in E}v_{ij}^2c_{ij}(\rho)$.

Define the divergence of the discrete flux $\rho v$ on $\mathsf{G}$ as
$$\text{div}_{\mathsf{G}}(\rho v) := \left(\sum_{j \in N(i)} \sqrt{w_{ij}}v_{ij} c_{ij}(\rho)\right)_{i=1}^n,$$
where $w_{ij}$'s are the weights on the edges.

The gradient on the graph $\nabla_{\mathsf{G}}$ can be obtained from the dual operation via integration by parts under the inner product defined above: 
$$
-\sum_{i\in V} \text{div}_{\mathsf{G}} (\rho v)|_i \xi_i = \langle \nabla_{\mathsf{G}} \xi, v\rangle_\rho,
$$
in which the discrete scalar function $\xi=(\xi_i)_{i\in V} :V\rightarrow \mathbb{R}$ induces a vector field 
$$(\nabla_{\mathsf{G}}\xi)_{ij}:=\sqrt{w_{ij}}(\xi_j-\xi_i)_{(i,j) \in E}.$$ 

For simplicity, we consider $w_{ij}=w_{ji}=1, i,j \in V$ in this paper.

\begin{remark}
Other choices for $c_{ij}$, such as the simple average $c_{ij}^A(\rho) = \frac{\rho_i+\rho_j}{2}$, the logarithmic weight $c_{ij}^L(\mathbf\rho)=\frac{\rho_i-\rho_j}{\log \left(\rho_i\right)-\log \left(\rho_j\right)}$, \textit{etc.}, have been used in previous studies~\cite{feng2026discrete,chow2019discrete,maas2011gradient}. 
However, weights of this type do not admit a time-inhomogeneous Markov process $X(t)$ due to the possibility of negative transition probabilities~\cite{cui2021stochastic}. Therefore, they are not considered in this study.
\end{remark}\label{rmk:theta}


\subsection{Time Inhomogeneous Markov Process}

Given an initial distribution $\rho_0$, 
let $\{X(t)\}_{t\geq 0}$ be a stochastic process initialized by $X(0)\sim \rho_
0.$
Within $0\leq \tau \leq t,$ the transition kernel $P_{\tau,t}$ defines the state transition of the particles: $${[P_{\tau,t}]}_{ij}=P(X(t)=s_j\mid X(\tau)=s_i).$$ 
$X$ is called time inhomogeneous Markov process if $P$ is time-dependent. 
The transition matrix satisfies the forward Kolmogorov equation $$\frac{\partial}{\partial t} P_{\tau,t} = P_{\tau,t} Q(t), P_{\tau,\tau}=I,$$ where $Q(t)$ (henceforth written as $Q$) is the time-dependent transition rate matrix, or infinitesimal generator.

The off-diagonal entries of $Q$ satisfy $Q_{ij}\geq 0$ for $i \neq j,$ and each row sums to zero $$\sum_{j=1}^n Q_{ij}= 0.$$ Consequently, the diagonal entries are given by $$Q_{ii}=-\sum_{j\neq i} Q_{ij}\leq0.$$
Furthermore, the marginal probability vector $\rho=(\mathbb{P}(X=s_i))_{i=1}^n$ satisfies~\cite{cui2021stochastic,kolokoltsov2010nonlinear} $$\frac{d \rho}{dt}=\rho\,Q, \quad \rho(t) = \rho(\tau) P_{\tau,t}.$$

Letting $t_m=mh,m=0,\cdots,M,$ where $h=\frac{T}{M},$ we apply the forward Euler approximation of the transition rate $Q$ by transition probability matrix $P$. The transition probability matrix is defined by 
\begin{equation}\label{eq:EulerTransition}
 P_m^h:= I+hQ(t_m)
\end{equation} within one step. 
The step size $h$ is chosen such that $1+h[Q(t_m)]_{ii}\geq 0$ for all $i$ and $m$. 
Only the nonzero candidate transitions are stored. Consequently, each row contains at most $d_\text{max}+1$ entries, including the stay-put probability. Thus, the matrix $[P_m^h]$ has dimensions $N \times (d_\text{max}+1)$.

\subsection{Stochastic Hamiltonian Process}\label{subsec:SHP}

The stochastic process $\{X(t)\}_{t\geq 0}$ is a Hamiltonian process on the graph if the below conditions are satisfied. 
 
The density of $X(t)$ satisfies the generalized master equation ofa time inhomogeneous Markov chain, given by $\frac{d \rho}{dt}=\rho Q.$ 

The relationship between the transition-rate matrix $Q$ and the potential function $S$ is:
\begin{equation}\label{eq:parameterized-Q}
 Q_{ij} = \begin{cases} (S_j-S_i)^+, & j \in N(i), \\ - \sum_{k \in N(i)} (S_k-S_i)^+, & j = i, \\ 0, & j \notin N(i) \cup \{i\}. \end{cases}
\end{equation}
Here, $(x)^+=\max(x,0),(x)^-=\min(x,0), x=(x)^+ +(x)^-.$ 

The stochastic process $X$ is called an SHP if the corresponding $\rho$ and $S$ satisfy the WHF. We propose the following definition of the WHF on graphs, which is a special case of the general formulation in~\cref{eq:WHs0}.

\begin{proposition}[Wasserstein Hamiltonian Flow on Graphs]\label{def:WHSoG}
A WHF defined on the cotangent bundle of $\mathcal{P}(\mathsf{G})$ is 
\begin{equation}\label{eq:WHs}
 \begin{cases}
 \frac{d\rho_i}{dt} = \sum_{j \in N(i)} [(S_i - S_j)^- \rho_i + (S_i - S_j)^+ \rho_j], \\
 \frac{d S_i}{dt} = -\frac{1}{2} \sum_{j \in N(i)}((S_j - S_i)^+)^2 - f_i(\rho, t),
 \end{cases}
 \end{equation}
 where the corresponding Hamiltonian is 
 \begin{equation}\label{eq:sHP}
 \mathcal{H}(\rho,S)
 =\frac{1}{4}\sum_{i\in V} \sum_{j\in N(i)}(S_i-S_j)^2 c_{ij}(\rho) + \mathcal{F}(\rho,t).
 \end{equation} 
Here, $\mathcal{F}(\rho,t)$ is a function depending on $\rho$ and $t$, and $f_i(\rho, t)=\frac{\partial}{\partial \rho_i}\mathcal{F}(\rho,t).$
\end{proposition}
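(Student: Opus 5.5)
The plan is to compute the two partial derivatives of the Hamiltonian \cref{eq:sHP} with the upwind weight $c_{ij}=c^U_{ij}(\rho)=\rho_i\mathbb{I}_{\{S_j>S_i\}}+\rho_j\mathbb{I}_{\{S_i>S_j\}}$, and substitute them into the canonical system \cref{eq:WHs0}, i.e. $\dot\rho=\partial\mathcal H/\partial S$ and $\dot S=-\partial\mathcal H/\partial\rho$. The main simplification is to rewrite the kinetic part of \cref{eq:sHP} in a form where the upwind indicator is absorbed into positive parts. For each ordered pair we have $(S_i-S_j)^2 c_{ij}=\rho_i((S_j-S_i)^+)^2+\rho_j((S_i-S_j)^+)^2$. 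Summing over $i\in V$, $j\in N(i)$ and using the symmetry of the neighbor relation, the two terms contribute equally, so
\begin{equation*}
\mathcal K(\rho,S)=\frac12\sum_{i\in V}\rho_i\sum_{j\in N(i)}\big((S_j-S_i)^+\big)^2 .
\end{equation*}

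\textbf{Density equation.} In this form $\mathcal K$ is linear in $\rho$ with coefficients depending on $S$, and $\mathcal F$ does not depend on $S$. Differentiating with respect to $\rho_i$ is therefore immediate:
\begin{equation*}
\frac{\partial\mathcal H}{\partial\rho_i}=\frac12\sum_{j\in N(i)}\big((S_j-S_i)^+\big)^2+f_i(\rho,t).
\end{equation*}
Then $\dot S_i=-\partial\mathcal H/\partial\rho_i$ gives the second line of \cref{eq:WHs}.

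\textbf{Potential equation.} Here I would differentiate with respect to $S_i$, using $\frac{d}{dx}\frac12((x)^+)^2=(x)^+$, which is valid and $C^1$ everywhere, including at $x=0$. The variable $S_i$ appears in the terms indexed by $(i,j)$ and by $(k,i)$ for neighbors $j,k$. This yields
\begin{equation*}
\frac{\partial\mathcal K}{\partial S_i}=-\sum_{j\in N(i)}\rho_i(S_j-S_i)^++\sum_{j\in N(i)}\rho_j(S_i-S_j)^+ .
\end{equation*}
Since $-(S_j-S_i)^+=(S_i-S_j)^-$, this is exactly $\sum_{j\in N(i)}[(S_i-S_j)^-\rho_i+(S_i-S_j)^+\rho_j]$, the first line of \cref{eq:WHs}. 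As a consistency check, I would verify that this equals $(\rho Q)_i$ with $Q$ from \cref{eq:parameterized-Q}. That identifies the flow with the master equation of the SHP and shows that total mass is conserved, so $\rho$ stays on $\mathcal P(\mathsf G)$ and the system lives on its cotangent bundle.

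\textbf{Main obstacle.} The delicate point is that $c^U_{ij}$ depends on $S$ through indicators, so naive differentiation of \cref{eq:sHP} would seem to produce delta-type terms at $S_i=S_j$. The rewriting above removes this issue. The function $(x^+)^2$ is continuously differentiable, and on the tie set $S_i=S_j$ the factor $(S_i-S_j)^2$ vanishes, so the choice of indicator convention there is irrelevant. I would state this explicitly, perhaps noting that $\mathcal H$ is $C^1$ in $S$. The remaining steps are then routine bookkeeping of the double-counting factor $\tfrac14$ versus $\tfrac12$.
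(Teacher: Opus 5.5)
Your proposal is correct and follows the same route as the paper: compute $\partial\mathcal H/\partial S_i$ and $\partial\mathcal H/\partial\rho_i$ for the upwind weight, then substitute them into the canonical system \cref{eq:WHs0}. The one difference is your preliminary rewriting $\mathcal K=\frac12\sum_{i\in V}\rho_i\sum_{j\in N(i)}((S_j-S_i)^+)^2$, which shows explicitly why the $S$-dependence of $c^U_{ij}$ adds no extra terms; the paper's proof skips this point by differentiating $(S_i-S_j)^2c_{ij}(\rho)$ in $S_i$ as if $c_{ij}$ did not depend on $S$.
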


\begin{proof}
 The Hamiltonian above~\cref{eq:sHP} can be expanded to $$\mathcal{H}(\rho,S)=\frac{1}{4}(\sum_{(i\rightarrow j) \in E}w_{ij}(S_i-S_j)^2 c_{ij}(\rho)+\sum_{(j\rightarrow i) \in E}w_{ji}(S_i-S_j)^2 c_{ji}(\rho)) + \mathcal{F}(\rho,t).$$ 
The discrete master equation is from $\frac{d \rho_i}{dt} = \frac{\partial \mathcal{H}}{\partial S_i}:$ 
$$
 \begin{aligned}
  \frac{\partial \mathcal{H}}{\partial S_i}
  &=\frac{1}{4}\left(\sum_{(i\rightarrow j) \in E} w_{ij}2(S_i - S_j)c_{ij}(\rho) + \sum_{(j\rightarrow i) \in E} w_{ji}2(S_i - S_j)c_{ji}(\rho)\right)\\
  &=\sum_{j \in N(i)}w_{ij}(S_i - S_j)c_{ij}(\rho)\\
  &=\sum_{j \in N(i)}w_{ij}(S_i - S_j)(\rho_i \mathbb{I}_{\{S_j > S_i\}} + \rho_j \mathbb{I}_{\{S_i > S_j\}}) \\
  &=\sum_{j \in N(i)} w_{ij}[(S_i - S_j)^-\rho_i + (S_i - S_j)^+\rho_j], 
 \end{aligned}
  $$ 
  and the Hamilton-Jacobi (HJ) equation is from the $\frac{d S_i}{dt} = -\frac{\partial \mathcal{H}}{\partial \rho_i}:$
  $$
  \frac{\partial \mathcal{H}}{\partial \rho_i} = \frac{1}{2} \sum_{j \in N(i)}w_{ij} ((S_j - S_i)^+)^2+f_i(\rho, t).
  $$
\end{proof}

We refer to the general SHP from the viewpoint of Lagrangian framework proposed in~\cite{cui2021stochastic}.

\section{Two-Point Boundary Stochastic Hamiltonian Process on Graphs}\label{sec:2ptBdySHP}

The two-point boundary problem for an SHP on a graph is generally difficult to solve directly if the graph is large. We therefore use an equivalent variational characterization known as the minimal action functional on the graph probability manifold $\mathcal P(G)$. 

\begin{definition}[Minimal Action Functional]\label{def:ActionFunctional}
Given an initial density $\rho(0)=\rho_0$, a kinetic energy $\mathcal{K}(\rho,S)=\frac{1}{2}\langle \nabla_G S,\nabla_G S \rangle_{c_{ij}(\rho)},$ a running potential $\mathcal{F}(\rho)$, and a terminal potential $\mathcal{G}(\rho(T))$, we define the constrained action functional as
\begin{equation}\label{eq:ActionFunctional}
 \inf_{\rho,S} \quad \mathcal{J}(\rho,S) = \int_0^T \left( \frac{1}{2}\langle \nabla_G S,\nabla_G S \rangle_{c_{ij}(\rho)} - \mathcal{F}(\rho) \right) dt + \mathcal{G}(\rho(T)),
\end{equation}
\begin{equation}\label{eq:continuity}
  \begin{aligned}
    \text{s.t.}\qquad &\frac{d\rho_i}{dt}=\sum_{j \in N(i)} \left( (S_i-S_j)^+ \rho_j + (S_i-S_j)^-\rho_i \right), \quad \text{for } i \in V, \\
    &\rho(0) = \rho_0.
 \end{aligned}
\end{equation}
\end{definition}
\Cref{eq:continuity} is the continuity equation, which is exactly the master equation $\frac{d \rho}{dt}=\rho Q$ utilized in our framework.

The first-order optimality conditions of the constrained action functional in~\cref{eq:ActionFunctional} is 
\begin{equation}\label{eq:ActionFunc1stOpt}
 \begin{cases}
 \frac{d \rho_i}{dt} +\sum_{j \in N(i)}(S_i - S_j) c_{ij}(\rho) = 0, \\
 \frac{d S_i}{dt} + \frac{1}{2} \sum_{j \in {N(i)}} (S_i-S_j)^2\frac{\partial{c_{ij}(\rho)}}{\partial \rho_i} + \frac{\partial \mathcal{F}}{\partial \rho_i} = 0, \\ 
 \rho_i(0) = (\rho_0)_i, \quad S_i(T) = -\frac{\partial \mathcal{G}}{\partial \rho_i}(T), \quad \text{for } i\in V. 
 \end{cases}
\end{equation}
While the core dynamics are known to arise directly from the partial derivatives of the Hamiltonian~\cref{eq:sHP}, introducing the unconstrained Lagrangian $\mathcal{L} = \mathcal{J} + \langle S, \frac{d \rho}{dt} - \rho Q \rangle$ allows us to capture the full boundary value problem. Finding the stationary point $\delta\mathcal{L} = 0$ recovers the symplectic Hamiltonian system~\cref{eq:WHs}, subject to the specific boundary conditions $\rho(0) = \rho_0$ and $S(T) = -g(\rho(T))$.

\subsection{Particle Level Formulations}
In this section, we present the particle-level characterization of the Hamiltonian system~\cref{eq:WHs}. Consider a single particle following the Markov process $\{X(t)\}_{t\geq 0}$, which is driven by the pair $(X, Q)$. At the microscopic level, the integrand of the minimal action functional corresponds to the particle-level Lagrangian $L(X, S)$. The Hamiltonian $H(X,S)$ is obtained via the Legendre transform of the Lagrangian $L(X, S)$ with respect to the potential $S$.
The macroscopic Hamiltonian $\mathcal{H}(\rho,S)$ is then exactly recovered by taking the expectation of $H(X,S)$ over the density $\rho$: 
\begin{equation}\label{eq:Hamiltonian-quadratic}
\begin{aligned}
  \mathcal{H}(\rho,S)
  &=\mathbb{E}_{X\sim \rho}\left[\frac{1}{4}\sum_{j\in N(X)}(S_X-S_j)^2 + F(X,\rho)\right]\\
  &= \mathbb{E}_{X\sim \rho}[H(X,S)],
\end{aligned}
\end{equation}
where $\mathcal{F}(\rho)=\mathbb{E}_{X\sim \rho}[F(X,\rho)]$ connects the microscopic and macroscopic costs.

\begin{definition}[Minimal Integral Functional at the Particle Level]\label{def:ActionFunctionalP}
Consider the action functional constrained by a time-inhomogeneous Markov process $X$:
\begin{equation}\label{eq:ActionFunctionalP}
 \begin{aligned}
 \inf_{S} \quad & \mathcal{J}(S) = \mathbb{E}_{X\sim \rho}\left[ \int_0^T \left( \frac{1}{4}\sum_{j\in N(X)} (S_X-S_j)^2 - F(X,\rho) \right) dt + G(X(T),\rho(T)) \right], \\
 \text{s.t.} \quad 
 & P(X(t + dt) = s_j \mid X(t) = s_i) = \delta_{ij} + Q_{ij}dt + o(dt), \quad \text{as } dt \to 0,
 \end{aligned}
\end{equation}
where $\rho = \operatorname{Law}(X)$ with initial density $\rho(0) = \rho_0$, and $X(0) \sim \rho_0$. 
Here, the transition rate matrix $Q$ is determined by $S$ according to~\cref{eq:parameterized-Q}. 
\end{definition}

Here, the forward particle trajectories of the Markov process $\{X(t)\}_{t\geq 0}$ evolve according to the transition rates $Q$ induced by $S$, allowing us to recover the probability distribution $\rho$ as the empirical distribution of these particles.

\subsection{Parameterization of the Stochastic Hamiltonian Process}

By parameterizing the potential function as $S^\theta$, we restrict the infinite-dimensional variational problem to a specific parametric function class, transforming it into a parameter optimization problem. 

The parameterized action functional, subject to a time-inhomogeneous Markov process constraint, is defined as:
\begin{equation}\label{eq:ParameterizedActionFunctionalP}
\begin{aligned}
 \inf_\theta \quad & \mathcal{J}(\theta) = \mathbb{E}_{X\sim \rho}\left[ \int_0^T \left( \frac{1}{4}\sum_{j\in N(X)}(S^\theta_X-S^\theta_j)^2 - F(X,\rho) \right) dt + G(X(T),\rho(T)) \right], \\
 \text{s.t.} \quad 
 & P^\theta(X(t + dt) = s_j \mid X(t) = s_i) = \delta_{ij} + Q^\theta_{ij}dt + o(dt), \quad \text{as } dt \to 0,
\end{aligned}
\end{equation}
where $\rho = \operatorname{Law}(X)$ with initial density $\rho(0) = \rho_0$, and $X(0) \sim \rho_0$. 
Here, the transition rate matrix $Q^\theta$ is determined by $S^\theta$ according to
\begin{equation}\label{eq:Q-rate-matrix}
Q^{\theta}_{ij} = 
\begin{cases}
 (S^\theta_j-S^\theta_i)^+, & j \in N(i), \\
 -\sum_{k\in N(i)} Q^\theta_{ik}, & i = j, \\ 
 0, & j \notin N(i) \cup \{i\}.
\end{cases}
\end{equation}

To minimize the parameterized action functional $\mathcal{J}(\theta)$, we update the parameters $\theta$ using gradient descent: 
\begin{equation}\label{eq:gradient-descent-update}
\theta^{[k+1]} := \theta^{[k]} - \eta \nabla_\theta \mathcal{J}(\theta^{[k]}),
\end{equation}
where $k$ is the iteration index and $\eta > 0$ is the learning rate. Upon convergence to the parameters $\theta^*$, we obtain the optimized potential $S^{\theta^*}$. By simulating the forward particle trajectories of the Markov process $\{X(t)\}_{t\geq 0}$ using the transition rate matrix $Q^{\theta^*}$ induced by $S^{\theta^*}$, we recover the time-dependent density $\rho(t)$ as the empirical distribution of these particles.

This particle-level forward simulation effectively solves the SHP
\begin{equation}\label{eq:pSHP}
  \begin{cases}
    P^{\theta^*}(X(t + dt) = j \mid X(t) = i) = \delta_{ij} + Q^{\theta^*}_{ij} dt + o(dt), \quad j \in N(i), \\[1mm]
    S^{\theta^*}(t+dt) = S^{\theta^*}(t) - \frac{\partial\mathcal{H}}{\partial \rho}dt + o(dt),
  \end{cases}
\end{equation}
while satisfying the boundary conditions $\rho_i(0) = (\rho_0)_i$ and $S_i(T) = -g_i(\rho(T))$ for $i \in V$.

\section{Numerical Method}\label{sec:methodology}
In this section, we present our implementation of the parameterized SHP at the particle level, where the action functional is evaluated using Monte Carlo methods.

\subsection{Discretization and Markov Decision Process}
The particle-level objective functional in~\cref{eq:ParameterizedActionFunctionalP} is computed over the entire trajectory on the graph and can be evaluated via time discretization.

The minimization of the action functional constrained by the particle dynamics can be regarded as a Markov decision process (MDP).
Consider the time discretization, denote $X^{l,(m)}:=X^l(t_m)$ as the state of particle $l$ at time step $m$ and $\rho^{(m)}:=\rho(t_m)$ the emperical distribution of the particles at time $m$. For a fixed particle $l,$ let the trajectory to be $$\gamma^l = \left(X^{l,(0)},X^{l,(1)},\cdots,X^{l,(M)}\right).$$ 
If $X^{l,(m)}=s_i,$ then its next state belongs to $\{s_i\} \cup \{s_j\}, {j\in N(i)}, i\in V.$ 

The particles jump discretely according to the transition probability 
\begin{equation}P^{\theta,h}(X^{l,(m+1)} = s_j \mid X^{l,(m)} = s_i) = \delta_{ij} + Q^\theta_{ij}h
\end{equation}
at each time step following~\cref{eq:ParameterizedActionFunctionalP}.

By the Markov property, the probability of a trajectory is \begin{equation}\label{eq:probability-P-gamma-theta}
 P^{\theta,h}(\gamma^l;X^{l,(0)})=\rho_0\prod_{m=0}^{M-1}P^{\theta,h}(X^{l,(m+1)}|X^{l,(m)}).
\end{equation}

\subsection{Estimation of the Gradients on Graphs and Update of Parameters}\label{sec:grad_estimation}
We need to update the parameter $\theta$ in the parameterized potential function $S^\theta$ according to the gradient descent process~\cref{eq:gradient-descent-update}. 
Evaluating the exact total gradient $\nabla_\theta \mathcal{J}(\theta)$ in~\cref{eq:gradient-descent-update} is computationally intractable because the selected node $X^{(m+1)}\in V$ depends discontinuously on $P^\theta$. Thus, the ordinary backpropagation cannot differentiate through the sampled trajectory. We design an estimation method rather than using automatic differentiation directly.

The gradient of the objective functional of one particle can be evaluated using the log-derivative identity \textit{i.e.} $\nabla_\theta \log P^{\theta,h}(\cdot) = \frac {\nabla P^{\theta,h}(\cdot)}{P^{\theta,h}(\cdot)}$. This method has been applied on score function estimator and reinforcement learning~\cite{barto2018reinforcement,williams1992simple}.

\begin{proposition}[Gradient Estimation on Graphs]\label{pro:grad_estimation}
The gradient of the objective functional of a single particle starting from $X(0)=s^{(0)}$ is
\begin{equation}\label{eq:gradJ}
\nabla_{\theta}J(\theta; s^{(0)})=\mathbb{E}_{\gamma\sim P^{\theta,h}(\cdot;s^{(0)})}\left[C^\theta(\gamma) \nabla_\theta \log P^{\theta,h}(\gamma ; s^{(0)}) + \nabla_\theta C^\theta(\gamma)\right],
\end{equation}
where the total accumulated cost of each sampled trajectory is: 
\begin{equation}\label{eq:C_gamma}
 C^\theta(\gamma)=G(X^{(M)},\rho^{(M)})+\sum_{m=0}^{M-1} \left[\sum_{j\in N(X^{(m)})}\frac{1}{4}(S^{\theta,(m)}_{X^{(m)}}-S^{\theta,(m)}_j)^2-F(X^{(m)},\rho^{(m)})\right] \Delta t.
\end{equation}

\end{proposition}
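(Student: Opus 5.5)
The plan is to write the single-particle objective as a finite sum over discrete trajectories and differentiate term by term. We treat $\rho^{(m)}$ as given (frozen) inside the running and terminal costs, which is how the estimator is used in practice. With this convention the time-discretized objective for a particle started at $X^{(0)}=s^{(0)}$ is
\[
J(\theta;s^{(0)})=\sum_{\gamma}P^{\theta,h}(\gamma;s^{(0)})\,C^\theta(\gamma),
\]
where $\gamma=(s^{(0)},X^{(1)},\dots,X^{(M)})$ ranges over $V^{M}$ and $C^\theta$ is given by \cref{eq:C_gamma}. The trajectory probability $P^{\theta,h}(\gamma;s^{(0)})=\prod_{m=0}^{M-1}P^{\theta,h}(X^{(m+1)}\mid X^{(m)})$ follows from \cref{eq:probability-P-gamma-theta}, conditioned on the initial state so that the factor $\rho_0$ drops out. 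The key observation is that this sum is finite: there are at most $(d_{\max}+1)^M$ trajectories with nonzero probability. Hence differentiation and summation commute without any dominated-convergence argument, provided each summand is differentiable in $\theta$.

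Next, apply the product rule to each summand:
\[
\nabla_\theta\big(P^{\theta,h}C^\theta\big)=C^\theta\,\nabla_\theta P^{\theta,h}+P^{\theta,h}\,\nabla_\theta C^\theta .
\]
On trajectories with $P^{\theta,h}(\gamma)>0$, replace $\nabla_\theta P^{\theta,h}$ by $P^{\theta,h}\nabla_\theta\log P^{\theta,h}$ via the log-derivative identity. Summing against $P^{\theta,h}$ then gives the expectation in \cref{eq:gradJ}. By the Markov factorization, the log-probability splits as
\[
\nabla_\theta\log P^{\theta,h}(\gamma;s^{(0)})=\sum_{m}\nabla_\theta\log\big(\delta_{X^{(m)}X^{(m+1)}}+hQ^\theta_{X^{(m)}X^{(m+1)}}\big),
\]
which is the computable form. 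The term $\nabla_\theta C^\theta$ is the pathwise derivative of the kinetic part $\tfrac14\sum_j(S^{\theta}_{X}-S^\theta_j)^2$ with the sampled path held fixed.

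The main obstacle is the regularity and support issue. $Q^\theta_{ij}=(S^\theta_j-S^\theta_i)^+$ is only piecewise smooth in $\theta$, and a transition probability $hQ^\theta_{ij}$ can vanish, so $\log P$ is undefined there. Trajectories with zero probability should be handled first. Where $P^{\theta,h}(\gamma)=0$ and $\gamma$ uses an edge with $S^\theta_j<S^\theta_i$, the probability is identically zero in a neighbourhood of $\theta$ and contributes nothing. Where it is zero only at the kink $S^\theta_j=S^\theta_i$, we would assume, or argue it holds for almost every $\theta$, that no sampled edge sits exactly at a kink, so that $P^{\theta,h}$ is differentiable. Alternatively, the identity can be read with one-sided or almost-everywhere derivatives. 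We would also note that the step-size condition $1+hQ_{ii}\ge0$ keeps the stay-put probabilities valid. With these caveats stated, the proposition follows directly from the finite-sum computation above.
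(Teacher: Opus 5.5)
Your proposal is correct and follows the same route as the paper's proof: write $J(\theta;s^{(0)})$ as the finite sum $\sum_\gamma P^{\theta,h}(\gamma;s^{(0)})\,C^\theta(\gamma)$, apply the product rule termwise, and rewrite $\nabla_\theta P^{\theta,h}$ as $P^{\theta,h}\nabla_\theta\log P^{\theta,h}$. Your remarks on finiteness, zero-probability paths, and the kinks of $(\cdot)^+$ spell out regularity issues the paper leaves unstated; note also that freezing $\rho^{(m)}$ is not needed, since the same computation holds when $\rho^{(m)}$ is the $\theta$-dependent law, with $\nabla_\theta C^\theta$ read as a total derivative that includes the $\rho$-dependence of $F$ and $G$.
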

\begin{proof}

The objective functional of the single particle is
\begin{equation}\label{eq:J_X0}
 J(\theta; s^{(0)}) = \mathbb{E}_{\gamma \sim P^{\theta,h}(\cdot;s^{(0)})} [C^\theta(\gamma)|X(0)=s^{(0)}].
\end{equation}

As the gradient of $C^\theta(\gamma)$ cannot be directly calculated on discrete graphs,~\cref{eq:J_X0} can be evaluated by:
\begin{equation}\label{eq:grad_theta_J_derivation}
 \begin{aligned}
 \nabla_\theta J(\theta; s^{(0)}) &= \nabla_\theta \mathbb{E}_{\gamma \sim P^{\theta,h}(\cdot;s^{(0)})} [C^\theta(\gamma)]\\
 &= \nabla_\theta \left[ \sum_{\gamma} P^{\theta,h}(\gamma ; s^{(0)})\, C^\theta(\gamma) \right] \\
 &= \sum_{\gamma} C^\theta(\gamma)\nabla_\theta P^{\theta,h}(\gamma) + P^{\theta,h}(\gamma ; s^{(0)}) \, \nabla_\theta C^\theta(\gamma) \\
 &= \sum_{\gamma} P^{\theta,h}(\gamma ; s^{(0)}) \left[C^\theta(\gamma)\nabla_\theta \log P^{\theta,h}(\gamma ; s^{(0)}) \, +\nabla_\theta C^\theta(\gamma)\right]\\
 &= \mathbb{E}_{\gamma \sim P^{\theta,h}(\cdot;s^{(0)})} \left[ C^\theta(\gamma)\nabla_\theta \log P^{\theta,h}(\gamma ; s^{(0)}) + \nabla_\theta C^\theta(\gamma) \right].
 \end{aligned}
\end{equation}

\end{proof}

Known that the time-discretization of~\cref{eq:ParameterizedActionFunctionalP} is 
\begin{equation}\label{eq:mathcal-J}
 \mathcal{J}(\theta)=\mathbb{E}_{s^{(0)} \sim \rho_0}[J(\theta; s^{(0)})],
\end{equation}
and the gradient is 
\begin{equation}\label{eq:grad_mathcal-J}
 \nabla_\theta\mathcal{J}(\theta)=\mathbb{E}_{s^{(0)} \sim \rho_0}[\nabla_\theta J(\theta; s^{(0)})].
\end{equation}
We estimate the gradient by Monte-Carlo method.




\begin{corollary}
Letting $\widehat{\nabla_\theta \mathcal{J}}(\theta)$ be the Monte-Carlo estimator of the cost gradient $\nabla_\theta \mathcal{J}(\theta)$, we have
\begin{equation}\label{eq:grad-mathcalJ}
 \begin{aligned}
 \widehat{\nabla_\theta \mathcal{J}}(\theta)=\frac{1}{N}\sum_{l=1}^N \left(\sum_{m=0}^{M-1}C^\theta(\gamma^l)\nabla_\theta \log 
 P^{\theta,h}({X^{l,(m+1)}|X^{l,(m)}})\, 
 + \nabla_\theta C^\theta(\gamma^l)\right).
 \end{aligned}
\end{equation}
\end{corollary}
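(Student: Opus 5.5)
The argument combines \cref{pro:grad_estimation} with the averaging identity \cref{eq:grad_mathcal-J}, and then uses the Markov factorization \cref{eq:probability-P-gamma-theta} to expand the score term. The corollary is essentially a bookkeeping statement, so the plan is to make each of these three reductions explicit and then invoke the law of large numbers for the sample mean.

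\textbf{Step 1: write the full gradient as a single expectation.} Substituting \cref{eq:gradJ} into \cref{eq:grad_mathcal-J} and using the tower property, I write
\[
\nabla_\theta\mathcal{J}(\theta)=\mathbb{E}_{s^{(0)}\sim\rho_0}\,\mathbb{E}_{\gamma\sim P^{\theta,h}(\cdot;s^{(0)})}\left[C^\theta(\gamma)\nabla_\theta\log P^{\theta,h}(\gamma;s^{(0)})+\nabla_\theta C^\theta(\gamma)\right].
\]
This is an expectation over the joint law of $(X^{(0)},\gamma)$. That joint law is exactly the law of a particle simulated with $X^{(0)}\sim\rho_0$ and transitions $\delta_{ij}+Q^\theta_{ij}h$.

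\textbf{Step 2: expand the score of the trajectory.} Take the logarithm of \cref{eq:probability-P-gamma-theta}. The factor $\rho_0(X^{(0)})$ does not depend on $\theta$, so it drops out under $\nabla_\theta$. This leaves
\[
\nabla_\theta\log P^{\theta,h}(\gamma;s^{(0)})=\sum_{m=0}^{M-1}\nabla_\theta\log P^{\theta,h}(X^{(m+1)}\mid X^{(m)}).
\]
Multiplying by the scalar $C^\theta(\gamma)$ and moving it inside the sum gives precisely the summand appearing in \cref{eq:grad-mathcalJ}.

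\textbf{Step 3: form the Monte Carlo estimator.} I draw $N$ i.i.d. trajectories $\gamma^l$, $l=1,\dots,N$, from the joint law of Step 1. Replacing the expectation by the empirical mean gives \cref{eq:grad-mathcalJ}. This estimator is unbiased and converges almost surely as $N\to\infty$ by the strong law of large numbers, provided the integrand is integrable. Integrability holds because the trajectory space is finite: there are at most $n(d_{\max}+1)^M$ paths.

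\textbf{Main obstacle.} The delicate point is that the score $\nabla_\theta\log P^{\theta,h}(X^{(m+1)}\mid X^{(m)})$ must be well defined along sampled paths. Two issues arise, and I would handle them as follows.

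\begin{itemize}
\item Sampled transitions have positive probability by construction. I would also assume $h$ is chosen so that the stay-put probability $1+hQ_{ii}>0$ strictly, which guarantees every log is finite.
\item The positive part $(S_j-S_i)^+$ is only almost-everywhere differentiable in $\theta$. A transition with probability zero never appears in a sampled path. At an interior point where $S_j=S_i$ for some sampled jump, I would interpret the gradient as a subgradient, or assume a.e. differentiability of $S^\theta$ and ties with measure zero.
\end{itemize}

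A second subtlety is that $C^\theta$ also depends on the empirical density $\rho^{(m)}$, which is itself a function of $\theta$. I would treat $\rho^{(m)}$ as frozen, i.e. evaluated from the current particle ensemble, consistent with how $\nabla_\theta C^\theta$ is used in \cref{pro:grad_estimation}, and state this convention explicitly.
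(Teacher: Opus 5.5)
Your proposal follows the paper's proof essentially step for step: the paper also writes $\nabla_\theta\mathcal{J}$ as a single expectation over $X(0)\sim\rho_0$ and $\gamma\sim P^{\theta,h}$, takes the logarithm of the Markov factorization, drops the $\theta$-independent term $\log\rho_0(X^{l,(0)})$, and replaces the expectation by the sample mean over $N$ trajectories; your remarks on the finite path space, on ties in $(S_j-S_i)^+$, and on strict positivity of the stay-put probability are extra care that the paper does not take. One small correction to those extras: if $\rho^{(m)}$ is the empirical density of the same $N$ particles, as in the paper, then the $N$ summands are coupled through $F$ and $G$, so the estimator is in general biased at finite $N$ and the i.i.d.\ strong law does not apply directly; your unbiasedness and almost-sure convergence claims hold as stated only if $\rho^{(m)}$ is frozen at the true law $\rho(t_m)$.
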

\begin{proof}

Taking gradient of both side of~\cref{eq:mathcal-J}, we have
\begin{equation}\label{eq:grad_mathcal_J}
 \nabla_{\theta}\mathcal{J}(\theta)
 =\mathbb{E}_{X(0) \sim \rho_0,\gamma \sim P^{\theta,h}} \left[C^\theta(\gamma) \nabla_\theta \log P^{\theta,h}(\gamma; X(0)) + \nabla_\theta C^\theta(\gamma) \right].
\end{equation}
Consider a trajectory starting from $ X^{l,(0)}=s^{l,(0)}$. Taking logarithm on both sides of~\cref{eq:probability-P-gamma-theta}, it becomes:
\begin{equation}\label{eq:probability-logP-gamma-theta}
\log P^{\theta,h}(\gamma^l;X^{l,(0)})=\log \rho_0 (X^{l,(0)}) + \sum_{m=0}^{M-1} \log P^{\theta,h}({X^{l,(m+1)}|X^{l,(m)}}).
\end{equation} The score function can be obtained by taking derivative of both side of~\cref{eq:probability-logP-gamma-theta}: 
\begin{equation}\label{eq:probability-gradlogP-gamma-theta}
 \nabla_\theta \log P^{\theta,h}(\gamma^l; X^{l,(0)})=\sum_{m=0}^{M-1} \nabla_\theta \log P^{\theta,h}({X^{l,(m+1)}|X^{l,(m)}}).
\end{equation}
Here, the first term of the right hand side $\nabla \log \rho_0 (X^{l,(0)})=0$ has been removed.

Combining~\cref{eq:grad_theta_J_derivation} and~\cref{eq:grad_mathcal_J}, the corresponding Monte-Carlo estimator of the gradient $\nabla_\theta \mathcal{J}(\theta^{[k]})$ is given by:
\begin{equation}
 \begin{aligned}
 \widehat{\nabla_\theta \mathcal{J}}(\theta^{[k]})= \frac{1}{N}\sum_{l=1}^N \left(\sum_{m=0}^{M-1}C^\theta(\gamma^l)\nabla_\theta \log P^{\theta,h}({X^{l,(m+1)}|X^{l,(m)}})\, 
 + \nabla_\theta C^\theta(\gamma^l)\right).
 \end{aligned}
\end{equation}

\end{proof}

Denote $\theta^{[k]}$ as the parameters set at the $k$th iteraten. The parameters set is updated as \begin{equation}\label{eq:parameter_update}\theta^{[k+1]} := \theta^{[k]} - \eta \widehat{\nabla_\theta \mathcal{J}}(\theta^{[k]}).\end{equation} Here, $\eta > 0$ is the learning rate.

\begin{algorithm}[htbp]
 \caption{Parameterized Stochastic Hamiltonian Process with Two Point Boundary conditions on Finite Graphs}\label{alg:neuralSHJP}
 \begin{algorithmic}
 \STATE{Given initial distribution ${\rho_0}$, sample $X(0)\sim {\rho_0}$, initialize the parameterized potential $S_i:=S(s_i,t;\theta)$ for $i \in V.$}
 \FOR{$k=0,\cdots,K$ epochs}
 \FOR{$m=0,\cdots,M-1$ steps$, \forall i \in V$}
 \FOR{each particle in the batch with $N$ particles}
 \STATE{Update the transition-rate matrix ${Q}$ at time and the transition probability ${P^{h}}$ of the particle $l$ at $X^{l,(m)}$, and sample the next state:}
 
 \STATE{\textbf{Denote the current state as $s^{l,(m)}=s_i,$} 
 $$
 \begin{cases}
 Q_{ij} \leftarrow (S(s_j,t_m;\theta)-S(s_i,t_m;\theta))^+, \quad j \in N(i)\\
 Q_{ii} \leftarrow (-\sum_{j\in N(i)}Q_{ij})^+, \quad j \in N(i)\\
 P_{ij} \leftarrow hQ_{ij}, \quad j \in N(i)\\
 P_{ii} \leftarrow 1+hQ_{ii},\\
 s^{l,(m+1)}\leftarrow \text{Sample} \left(X^{l,(m+1)} \sim P^{h}_{s^{l,(m)} j}\right).
 \end{cases}$$
 }
 \ENDFOR
 \STATE{Collect the trajectory of the state $s^{l,(\cdot)},$ parameterized potential funciton $S(s^{l,(\cdot)};\theta)$: $\gamma^l \leftarrow (s^{l,(\cdot)},{S(s^{l,(\cdot)};\theta)}).$}
 \ENDFOR

 Evaluate the gradient of the objective function estimated by~\cref{eq:grad-mathcalJ} corresponding to the trajectories $\gamma$ as

$$
  \widehat{\nabla_\theta \mathcal{J}}(\theta^{[k]})=\frac{1}{N}\sum_{l=1}^N \left(\sum_{m=0}^{M-1}{C^\theta(\gamma^l)} \nabla_\theta\log P^{\theta,h}({X^{l,(m+1)}|X^{l,(m)}}) + \nabla_\theta C^\theta(\gamma^l)\right).
$$

Update the potential function ${S}(V;\theta^{[k+1]})$ according to~\cref{eq:parameter_update} by: $$\theta^{[k+1]} := \theta^{[k]} - \eta \nabla_{\theta}\widehat{\nabla_\theta \mathcal{J}}(\theta^{[k]}).$$
 \ENDFOR
 
 \RETURN $S^* = {S}(\theta^*)$
 \end{algorithmic}
 \end{algorithm}

 \subsection{Algorithm}

The parameterized SHP algorithm is summarized in \cref{alg:neuralSHJP}. At time step $m$, the graph-valued random variable $X^{l,(m)}$ denotes the state of particle $l$. Conditional on
$
X^{l,(m)}=s_i,
$
the current-node index is $i$, and the candidate next-node index satisfies $j\in N(i)\cup\{i\}$. The potential network is evaluated using the node representation $s_i$ and time $t_m$.

\subsection{Scalability Analysis}
In this section, we analyze the computational complexity and memory storage. 

The degree of the graph on each node is $d_i, i=1,\cdots,n,$ and the number of edges is $|E|.$ As the total degree of all the nodes on the graph is $\sum_{i \in V}d_i=2|E|\leq|V|d_\text{max},$ the number of edges are bounded as $|E| \leq \frac{|V|d_\text{max}}{2}.$

\textbf{Computational complexity.}
Let $N$, $M$ and $K$ denote the numbers of particles, time steps and optimization iteratens, respectively. We assume fixed feature dimension and network architecture.

In each particle transition, at most $d_{\max}+1$ candidate states are considered. Let $C_{\mathrm{pint}}$ and $C_{\mathrm{pter}}$ denote the computational costs of the interaction and terminal terms, including differentiation. The particle formulation therefore has computational complexity
\begin{equation}\label{eq:complexity-particle}
\mathcal{C}_{\mathrm{SHP}}
=
\mathcal{O}\!\left(
K\left[
M\left(N(d_{\max}+1)+C_{\mathrm{pint}}\right)
+C_{\mathrm{pter}}
\right]
\right).
\end{equation}
Graph-neighbor interactions can be evaluated in $\mathcal{O}(N(d_{\max}+1))$ operations when node masses are efficiently aggregated and accessed, whereas direct all-pairs interactions require $\mathcal{O}(N^2)$ operations. For the latter, if the terminal cost does not dominate,
$$
\mathcal{C}_{\mathrm{SHP}}
=
\mathcal{O}\!\left(KMN(N+d_{\max}+1)\right).
$$

For an edge-based sparse implementation, the density formulation has computational complexity
\begin{equation}\label{eq:complexity-fluid}
\mathcal{C}_{\mathrm{WHF}}
=
\mathcal{O}\!\left(
K\left[
M\left(|V|+|E|+C_{\mathrm{dint}}\right)
+C_{\mathrm{dter}}
\right]
\right),
\end{equation}
where $C_{\mathrm{dint}}$ and $C_{\mathrm{dter}}$ are the corresponding interaction and terminal costs. Edge-local and direct all-pairs interactions require $\mathcal{O}(|E|)$ and $\mathcal{O}(|V|^2)$ operations,
respectively.

Thus, our particle formulation SHP replaces full-graph evolution with computations on sampled particles and their local neighborhoods. The computational cost in SHP depends on $N$ and $d_{\max}$ rather than directly on all nodes $|V|$ and edges $|E|$ in density-based WHF, provided that the interaction and terminal evaluations do not require full-density computation. Thus, our SHP method has smaller computational complexity than WHF on large-scale graphs. 

\textbf{Memory complexity.}
Storing $D$ features per node, edge indices, and padded neighbor indices requires
$$
\mathcal{M}_G
=
\mathcal{O}\!\left(
D|V|+|E|+|V|d_{\max}
\right).
$$
For fixed $D$ and bounded $d_{\max}$, this reduces to $\mathcal{O}(|V|)$.

Let $R_{\mathrm{pint}}$, $R_{\mathrm{pter}}$,
$R_{\mathrm{dint}}$, and $R_{\mathrm{dter}}$ denote the additional memory costs of the corresponding evaluations, including tensors saved for backpropagation. Retaining the complete differentiated trajectories gives the memory bounds in the particle-based SHP method $$
\mathcal{M}_{\mathrm{SHP}}
=
\mathcal{M}_G
+
\mathcal{O}\!\left(
M\left[N(d_{\max}+1)+R_{\mathrm{pint}}\right]
+R_{\mathrm{pter}}
\right)
$$
and density-based WHF method
$$
\mathcal{M}_{\mathrm{WHF}}
=
\mathcal{M}_G
+
\mathcal{O}\!\left(
M\left[|V|+|E|+R_{\mathrm{dint}}\right]
+R_{\mathrm{dter}}
\right).
$$

In addition to the common one-time graph memory $\mathcal{M}_G$, the memory storage in SHP depends on $MN$ and $Md_{\max}$ rather than directly on all nodes $M|V|$ and edges $M|E|$ in density-based WHF along the trajectory, thus our SHP method require much smaller memory storage than WHF on large-scale graphs. 

\begin{remark}[Mini-batch interaction]
In the implementation of our SHP method, evaluating the exact interaction at a single time step with batch size $B$ requires a pairwise memory of $R_{\mathrm{pint}}=\mathcal{O}(NB)$ per time step, down from $\mathcal{O}(N^2)$, while still preserving the global interaction. In contrast, WHF cannot achieve a similar reduction in memory cost, as it inherently requires the full density profile at each time step.
\end{remark}

\section{Experiments}\label{sec:experiments}

This section evaluates the empirical performance of our proposed method, Neural SHP. We first outline the implementation details, where a neural network parameterizes the potential function $S(i,t)$, and the parameters are optimized via our derived gradient estimation method. Subsequently, we demonstrate the effectiveness of Neural SHP across synthetic experiments and real-world graph datasets. Where applicable, we implement the WHF on graphs~\cref{eq:ActionFunctional}, termed Neural WHF, as a baseline for comparison. Neural WHF is a density-based method that directly computes node-wise densities.

\textbf{Neural parameterized potential function.}
We parameterize the potential function $S(i, t) = S_{NN}^\theta(i, t)$ using multi-layer perceptron (MLP).
For the particles on the graph, let $X^l_i$ denote the $l$th particle on node $s_i$. The $(D+1)$-dimensional input to the neural network is given by the concatenated vector $\mathbf{X}_i^{t} = (X_i,t)^\top \in \mathbb{R}^{D+1}$, which yields the output $S^\theta_{NN} (\mathbf{X}_i^{t}) = S(i,t)$ for all $i \in V$. Specifically, we employ a two-hidden-layer MLP formulated as: $${S}^\theta_{NN}(\mathbf{X}_i^{t}) = \mathbf{W}_{out}\cdot \sigma \left( \mathbf{W}_2 \cdot \sigma \left( \mathbf{W}_1 \cdot \mathbf{X}_i^{t} + \mathbf{b}_1\right) + \mathbf{b}_2 \right) + \mathbf{b}_{out},$$ where the activation function is $\sigma(\cdot) = \text{Tanh}(\cdot)$, and $\theta$ denotes the set of trainable weights ($\mathbf{W}$) and biases ($\mathbf{b}$). The hidden layers are configured with $H=64$ neurons.


\textbf{Transition probability modification.}
Across all our experiments, we must ensure valid transition probabilities ($P^\theta \geq 0$). To achieve this and prevent numerical underflow, we modify the transition probability $P_{ij}^{(m)}$ by applying a lower-bound threshold $\varepsilon$ to the unnormalized values $\widetilde{P}_{ij}^{(m)}$:$$P_{ij}^{(m)} = \frac{\max\{\widetilde{P}_{ij}^{(m)}, \varepsilon\}}{\sum_{k \in \{i\} \cup N(i)} \max\{\widetilde{P}_{ik}^{(m)}, \varepsilon\}}.$$
For numerical evaluation, we set this threshold to $\varepsilon = 10^{-14}$.
 
\textbf{Evaluation metrics.}
Define the time-discretized kinetic energy and potentials as follows:
\begin{equation}
\mathcal K_{\Delta t}
:=
\Delta t
\sum_{m=0}^{M-1}\mathcal K(t_m),
\qquad
\mathcal F_{\Delta t}
:=
\Delta t
\sum_{m=0}^{M-1}\mathcal F(t_m).
\end{equation}
$\mathcal F=\lambda_V \mathcal V + \lambda_W \mathcal W,$ $\lambda_V$ and $ \lambda_W$ are the coefficients. The discrete kinetic energy $\mathcal K_{\Delta t}$ and potential energy $\mathcal F_{\Delta t}$ are used to evaluate the results of the experiments. Across all experiments, models are evaluated at the training epoch that achieves the lowest estimated objective $\widehat{\mathcal{J}}^* = \frac{1}{2}\widehat{\mathcal{K}}_{\Delta t} -\lambda_F\mathcal F_{\Delta t} + \lambda_G\widehat{\mathcal{G}}_B$.
 


\subsection{Experiments on Synthetic Graphs}\label{subsec:testexample}


In this section, we conduct a series of experiments to evaluate the performance of our proposed Neural SHP method on synthetic graphs. By default, we use $N=500$ particles for the synthetic numerical experiments. 



\subsubsection{Computational complexity with respect to particle number on lattice graphs}\label{subsubsec:particle-number}

Let $\rho_\mathsf{G}(\boldsymbol{z}; \mathbf{m}, \mathbf{\Sigma})$ denote a $D$-variate Gaussian distribution defined over spatial coordinates $\boldsymbol{z}= (z_1, z_2, \ldots, z_D) \in \mathbb{R}^D$, with mean $\mathbf{m} \in \mathbb{R}^D$ and covariance matrix $\mathbf{\Sigma} \in \mathbb{R}^{D \times D}$. 
We compare the computational times of potentials $\mathcal V$ and $\mathcal W$ using $N$ particle sampled from $\rho(0)=\rho_\mathsf{G}\left(s;0.2\mathbf 1_2,0.05\mathbf{I}_2\right)$ on a $31\times31$-grid lattice graph. The linear potential is defined as $\mathcal{V} := \sum_{i\in V} \mathbb{V}_i\rho_i$ and the pairwise interaction potential as $\mathcal{W} := \frac{1}{2} \sum_{i\in V}\sum_{j\in V} \mathbb{W}_{ij}\rho_i\rho_j$. The symmetric positive-semidefinite interaction matrix $\mathbb{W}$ uses a Gaussian kernel to model the congestion penalty between particles:
\begin{equation}\label{eq:gaussian-kernel}
\mathbb{W}^{g}_{ij}
:=
\exp\left(
-\frac{\|X_i-X_j\|^2}{2\sigma_W^2}
\right);
\end{equation} 




To empirically verify the theoretical complexities ($\mathcal{O}(N)$ for linear terms and $\mathcal{O}(N^2)$ for interaction terms), Figure~\ref{fig:v-w-complexity} compares the execution time of evaluating the linear potential $\mathcal{V}$ against the global interaction potential $\mathcal{W}$. The measured runtimes exhibit an empirical scaling of $\text{Time}_\mathcal{V} \propto N^{0.973}$ and $\text{Time}_\mathcal{W} \propto N^{1.990}$. This confirms our analytical expectations: evaluating $\mathcal{V}$ scales linearly with $N$, whereas the global interaction $\mathcal{W}$ scales quadratically. Correspondingly, the analytical storage requirements are $\mathcal{O}(N)$ and $\mathcal{O}(N^2)$, respectively.

\begin{figure}[htbp]
 \centering
 \includegraphics[width=0.75\linewidth]{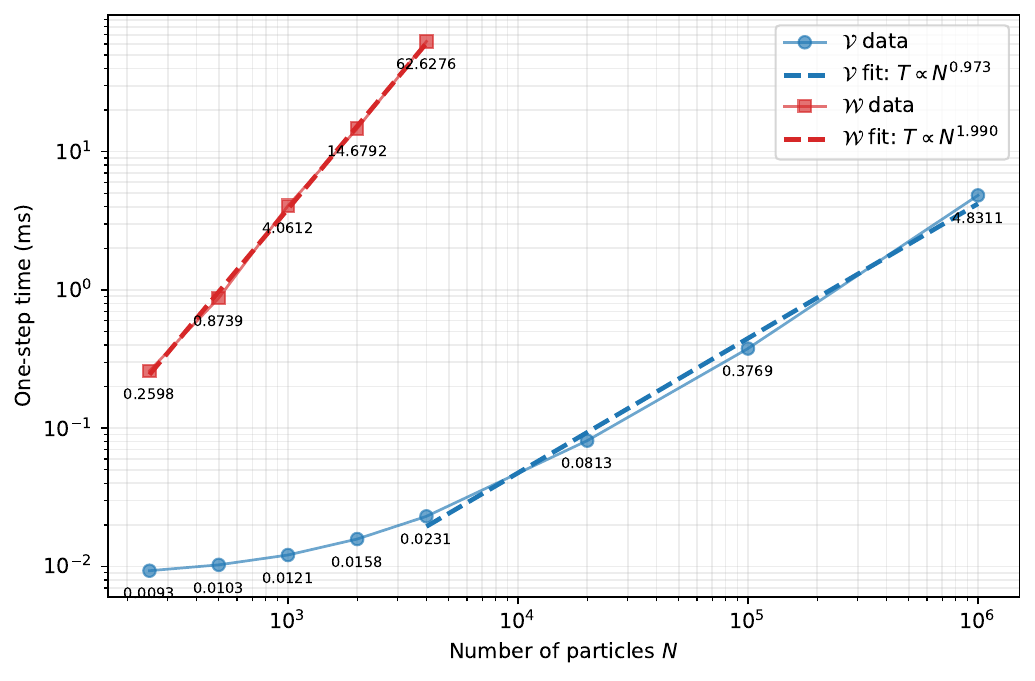}

\caption{Log-log scaling of computational time with respect to the number of particles $N$. The plot compares the linear potential $\mathcal{V}$ and the global interaction potential $\mathcal{W}$, with dashed lines indicating their respective least-squares fits.}
\label{fig:v-w-complexity}
\end{figure}

The dashed segments represent log-log least-squares fits applied to the five rightmost points for $\mathcal{V}$ and the four rightmost points for $\mathcal{W}$. Data points corresponding to smaller $N$ are excluded from the regression, as fixed computational overhead obscures the true asymptotic scaling.

\subsubsection{Scalability across graph dimensions}\label{subsubsec:graph-dim}
To efficiently scale the graph size, we construct regular lattices with eight nodes per coordinate axis and vary the coordinate dimension from $D=1$ to $7$ in the $[0,1]^D$ unit square. As $D$ increases, the total number of nodes and undirected edges scale exponentially:
$$|V|=8^D,
 \qquad
 |E|=7D 8^{D-1}.$$
The initial and target terminal distributions used in these experiments are $$
\rho_0(\boldsymbol{z})=\rho_\mathsf{G}\left(\boldsymbol z;\,0.2\mathbf 1_D,\,0.05\mathbf{I}_D\right),
\qquad
\rho_T(\boldsymbol{z})=\rho_\mathsf{G}\left(\boldsymbol z;\,0.8\mathbf 1_D,\,0.05\mathbf{I}_D\right).
$$

For these experiments, the temporal domain is discretized into $M=25$ steps with a step size of $\Delta t=0.5$, yielding a terminal time of $T=12.5$.

\begin{remark}[Implicit implementation]\label{remk:implicit-implementation}
On regular lattices, neighboring nodes can be computed dynamically from their indices, eliminating the need to store edges in memory.
\end{remark}

\begin{table}[htbp]
 \centering
 \small
 \begin{tabular}{@{}rrrrrrr@{}}
 \toprule
 $D$ & $|V|$ & $|E|$ & $\text{Time}_{\text{SHP}} (s)$ & $\text{Time}_{\text{WHF}} (s)$ & $R^\text{Peak}_{\text{SHP}} (\text{MB})$ & $R^\text{Peak}_{\text{WHF}} (\text{MB})$ \\
 \midrule
 1 & 8 & 7 & 0.0320 & 0.0111 & 356.20 & 350.22 \\
 2 & 64 & 112 & 0.0345 & 0.0137 & 346.66 & 347.71 \\
 3 & 512 & 1,344 & 0.0433 & 0.0285 & 357.36 & 354.10 \\
 4 & 4,096 & 14,336 & 0.0601 & 0.1187 & 378.43 & 432.12 \\
 5 & 32,768 & 143,360 & 0.0783 & 0.6358 & 423.10 & 1252.10 \\
 6 & 262,144 & 1,376,256 & 0.1121 & 6.2331 & 1313.13 & 5428.16 \\
 \bottomrule
 \end{tabular}\caption{Explicit implementation scaling results on lattices with eight nodes per axis, where $t$ is the epoch time (s) and $R^\text{Peak}$ is the peak memory (MB).}
 \label{tab:dimension-scalability}
\end{table}

\begin{figure}[htbp]
 \centering
 \includegraphics[width=0.7\linewidth]
 {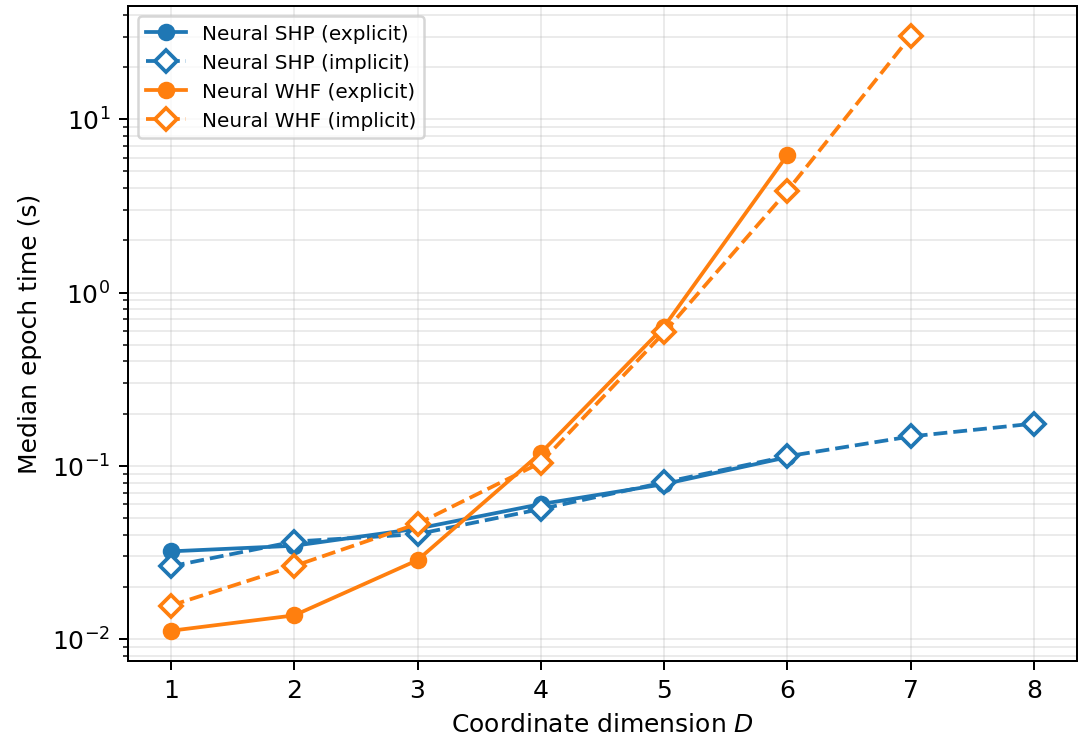}
 \caption{Log-scaled median computational time per epoch as a function of the coordinate dimension $D$.}
 \label{fig:computational_time_dim}
\end{figure}

Table~\ref{tab:dimension-scalability} and Figure~\ref{fig:computational_time_dim} demonstrate that Neural SHP scales significantly better than Neural WHF. Using explicit implementations, Neural WHF is faster for smaller graphs ($D \leq 3$), but Neural SHP overtakes it starting at $D=4$. This performance gap widens exponentially, rendering Neural SHP $55.6$ times faster at a coordinate dimension of $D=6$.

By leveraging the implicit implementation (Remark~\ref{remk:implicit-implementation}), denoted by dashed lines in Figure~\ref{fig:computational_time_dim}, we scale Neural SHP up to $D=8$. In contrast, Neural WHF was not executed at $D=8$. Differentiating an $M=25$-step density trajectory over all $8^8=16{,}777{,}216$ nodes requires an estimated peak memory of $214.75$ GB, vastly exceeding our $16$ GB hardware limit.

\subsubsection{Comparison between the particle-based and density-based methods on the lattice graphs across different grid sizes}\label{subsubsec:grid-size}
We compare the performance of Neural SHP and Neural WHF on two-dimensional lattice graphs of varying grid sizes. For this experiment, the spatial domain is defined as the square $\Omega=[0,1]^2$.

In these experiments, the initial distribution is a Laplacian distribution centered at $(0.3,0.3)^\top$:$$\rho_0(\boldsymbol{z}) = \frac{1}{Z_0} \exp\left( -\frac{\vert{}z_1-0.3\vert{} + \vert{}z_2-0.3\vert{}}{0.05} \right),$$where $Z_0$ is the normalization constant ensuring $\sum_{i\in V}\rho_i(0)=1$. The target distribution is $\rho_T(\boldsymbol{z}) = \rho_\mathsf{G}(\boldsymbol{z}; \,0.7\mathbf{1}_2, \,0.0025\mathbf{I}_2)$.

In this experiment, we use the squared-Maximum Mean Discrepancy ($MMD^2$) as the terminal condition of Neural SHP, which is calculated by the expectation of the single particle terminal costs. For a specific particle $l$, this terminal cost is given by:
\begin{equation}\label{eq:mmd}
 G_{\text{MMD}^2}(X^l(T),Y) = \frac{1}{N} \sum_{k=1}^N \mathbb{K}(X^l(T), X^k(T)) - \frac{2}{\tilde N} \sum_{k=1}^{\tilde N} \mathbb{K}(X^l(T), Y^k) + C,
\end{equation} 
where the kernel $\mathbb{K}$ here is set to be Gaussian kernel ($\mathbb{K}=\mathbb{W}^g$) with bandwidth $\sigma_{\text{MMD}}=\sigma_W$. In~\cref{eq:mmd}, $Y = \{Y^k\}_{k=1}^{\tilde N}$ represents $\tilde N$ samples from the target distribution $\rho_T$, and $C$ captures the self-interaction of these target samples. The terminal cost can also be regarded as a linear potential plus an interaction potential. We set a bandwidth parameter to be $\sigma_{\text{MMD}} = 0.2$ in this experiment. 

Taking the expectation over the terminal distribution yields the macroscopic potential, $\mathbb{E}_{X(T)\sim \rho(T)}[G_{\text{MMD}^2}] \propto \mathcal{G}_{\text{MMD}^2}(X,Y)$
\begin{equation}\label{eq:Emmd}
\mathbb{E}_{X(T)\sim \rho(T)}[G_{\text{MMD}^2}] = \frac{1}{N^2} \sum_{l=1}^N \sum_{k=1}^N \mathbb{K}(X^l(T), X^k(T)) - \frac{2}{N {\tilde N}} \sum_{l=1}^N \sum_{k=1}^{\tilde N} \mathbb{K}(X^l(T), Y^k).
\end{equation} 

For Neural WHF, we formulate the terminal loss using the $\ell_1$-distance between the computed terminal distribution ${\rho}(T)$ and the prescribed target distribution $\rho_T$:
\begin{equation}\label{eq:l1-distance}
{\mathcal{G}}_{\ell_1}=\left\|{\rho}(T)-\rho_T\right\|_1:=\sum_{i\in V}\left|{\rho}_{i}(T)-(\rho_T)_{i}\right|.
\end{equation}

The temporal domain is discretized into $M=50$ steps with a step size of $\Delta t=0.5$, yielding a terminal time of $T=25$. Both models are trained for $3000$ epochs. The loss weighting coefficients are setted to $\lambda_K : \lambda_G = 0.5 : 5000$
 Table~\ref{tab:grid-size-comparison-m50mmd02w005} summarizes the results. Notably, despite employing fundamentally different terminal potentials, both methods yield similar kinetic energy measurements. Furthermore, Neural SHP demonstrates superior computational scalability. Its epoch time grows significantly slower than that of Neural WHF as the grid expands.

\begin{table}[htbp]
 \centering
 \begin{minipage}[t]{0.5\linewidth}
 \centering
 \resizebox{\linewidth}{!}{%
 \begin{tabular}{cccc}
 \toprule
 Grid & ${\mathcal K}_{\Delta t}$ & ${\mathcal{G}}_{\text{MMD}^2}$ & Time/epoch (s) \\
 \midrule
 $21\times 21$ & 3.4684 & $3.0839\times10^{-4}$ & 0.0613 \\
 $31\times 31$ & 6.5186 & $3.7730\times10^{-4}$ & 0.0639 \\
 $41\times 41$ & 10.7460 & $7.2038\times10^{-4}$ & 0.0760 \\
 $51\times 51$ & 16.6010 & $1.4771\times10^{-3}$ & 0.0712 \\
 \bottomrule
 \end{tabular}
 }\subcaption{Neural SHP, terminal at ${\mathcal{G}}_{\text{MMD}^2}$}\label{subtab:neural-shp-mmd2}
 \end{minipage}\hfill
 \begin{minipage}[t]{0.45\linewidth}
 \centering
 \resizebox{\linewidth}{!}{%
 \begin{tabular}{cccc}
 \toprule
 Grid & $ {\mathcal K}_{\Delta t}$ & ${\mathcal{G}}_{\ell_1}$ & Time/epoch (s) \\
 \midrule
 $21\times 21$ & 3.8932 & 0.0143 & 0.0403 \\
 $31\times 31$ & 6.5828 & 0.0089 & 0.0587 \\
 $41\times 41$ & 10.8170 & 0.0103 & 0.0939 \\
 $51\times 51$ & 16.5670 & 0.0119 & 0.1297 \\
 \bottomrule
 \end{tabular}
 }\subcaption{Neural WHF, terminal at ${\widehat{\mathcal{G}}}_{\mathrm{\ell_1}}$}\label{subtab:neural-whf-l1}
 \end{minipage}
 \caption{Comparison of Neural SHP and Neural WHF across varying grid sizes.}
 \label{tab:grid-size-comparison-m50mmd02w005}
\end{table}

\subsection{Experiments on obstacles avoidance on graphs}\label{subsec:obstacle-avoidance}

We compare hard and soft obstacle representations on a common triangular base grid. We define the full domain $\Omega$, a circular barrier $\mathcal{B}_{\mathrm{c}}$, and a rectangular barrier $\mathcal{B}_{\mathrm{r}}$ as:$$\begin{aligned} \Omega &=\left\{\boldsymbol z\in\mathbb R^2: \left\lVert\boldsymbol z-\tfrac12\mathbf 1_2\right\rVert_2<\tfrac12\right\},\\ \mathcal B_{\mathrm c} &=\left\{\boldsymbol z\in\Omega: \left\lVert\boldsymbol z-\tfrac12\mathbf 1_2\right\rVert_2\leq R\right\},\\ \mathcal B_{\mathrm r} &=\left\{\boldsymbol z\in\Omega: z_1<0.2,\ \vert{}z_2-0.5\vert{}<0.1\right\}. \end{aligned}$$The hard obstacle model restricts the domain to $\Omega_{\mathrm{hard}} = \Omega\setminus(\mathcal B_{\mathrm c}\cup\mathcal B_{\mathrm r})$, explicitly removing nodes and incident edges within the barriers. Conversely, the soft obstacle model retains all nodes ($\Omega_{\mathrm{soft}}=\Omega$) but imposes a repulsive potential landscape $\mathbb{V}(\boldsymbol{z}) = \mathbb{V}_{\mathrm{c}}(\boldsymbol{z}) + \mathbb{V}_{\mathrm{r}}(\boldsymbol{z})$ to represent the barriers:$$\begin{aligned} \mathbb V_{\mathrm c}(\boldsymbol z) &=\frac{A}{2}\left[ 1+\tanh\!\left(\eta\left( R-\left\lVert\boldsymbol z-\tfrac12\mathbf 1_2\right\rVert_2 \right)\right)\right],\\ \mathbb V_{\mathrm r}(\boldsymbol z) &=\frac{A}{4} \left[1+\tanh\!\left(\eta(0.2-z_1)\right)\right] \left[1+\tanh\!\left(\eta(0.1-\vert{}z_2-0.5\vert{})\right)\right]. \end{aligned}$$


For the soft obstacles, we set $A=250$, $\eta=20$, $R=0.1$, and the potential weight $\lambda_V=0.5$. In both scenarios, the terminal penalty is governed by $\mathcal{G}_{\text{MMD}^2}$ with weight $\lambda_G=5000$ and bandwidth $\sigma_{\text{MMD}}=0.0333$. Both experiments simulate $M=40$ steps with $\Delta t=0.5$ (terminal time $T=20$). $N=500$ particles are used in Neural SHP. The hard- and soft-obstacle models are trained with learning rates of $10^{-3}$ and $2\times 10^{-4}$, respectively.

Figure~\ref{fig:re-linear} illustrates the learned particle densities and potential functions. In both cases, the population successfully avoids the obstacles by splitting around the center and re-aggregating near the target. Table~\ref{tab:obstacle-hard-soft} reports the optimal results. While the hard obstacle configuration achieves a lower terminal loss, the soft obstacle setup yields a lower discrete $\ell_1$ terminal discrepancy~\eqref{eq:l1-distance}.

\begin{table}[htbp]
 \centering
 \begin{tabular}{cccccccc}
 \toprule
 Constraint& $|V|$ & $|E|$ &${\mathcal K}_{\Delta t}$ &${\mathcal V}_{\Delta t}$ &${\mathcal{G}}_{\text{MMD}^2}$ &${\mathcal J}^*$ & $\|\rho(T)-\rho_T\|_1$ \\
 \midrule
 Hard & 752 & 2123 & 8.6758 & 0 & 0.0009 &50.7292 & 0.5965 \\
 \midrule
 Soft & 824 & 2369 & 9.8684 & 57.4285 & 0.0113 &119.0040 & 0.4866 \\
 \bottomrule
 \end{tabular}%
 \caption{Evaluation results of Neural SHP for both obstacle types.}
 \label{tab:obstacle-hard-soft}
\end{table}
 
\begin{figure*}[htbp] 
 \centering
 \begin{minipage}[b]{0.99\textwidth} 
 {
 \begin{minipage}[b]{0.49\linewidth} 
 \centering
 \includegraphics[width=\linewidth]{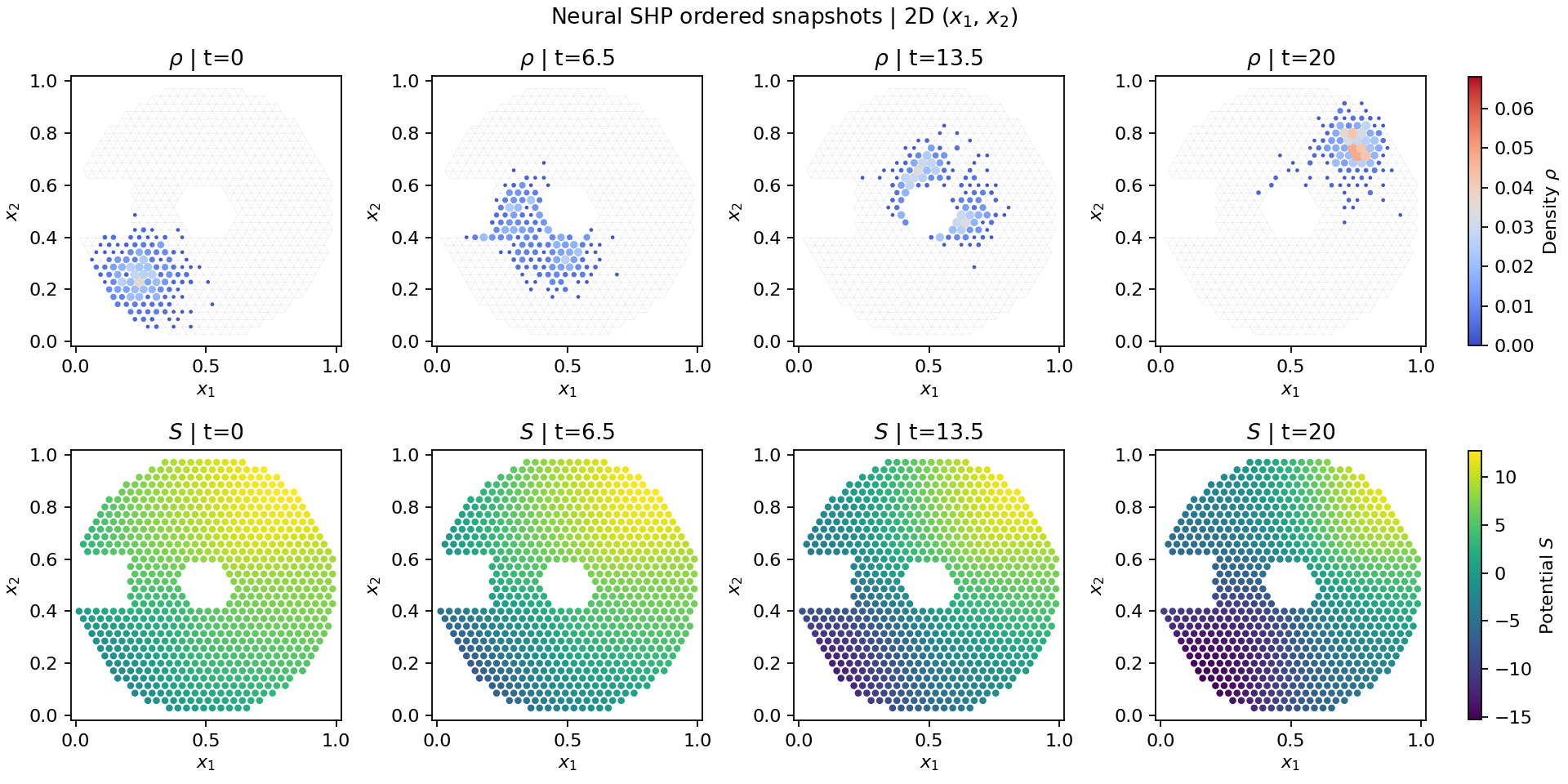}
 \subcaption{Hard obstacle snapshots}\label{fig:re-linear-hard}
 \end{minipage}
 }
 {
 \begin{minipage}[b]{0.49\linewidth}
 \centering
 \includegraphics[width=\linewidth]{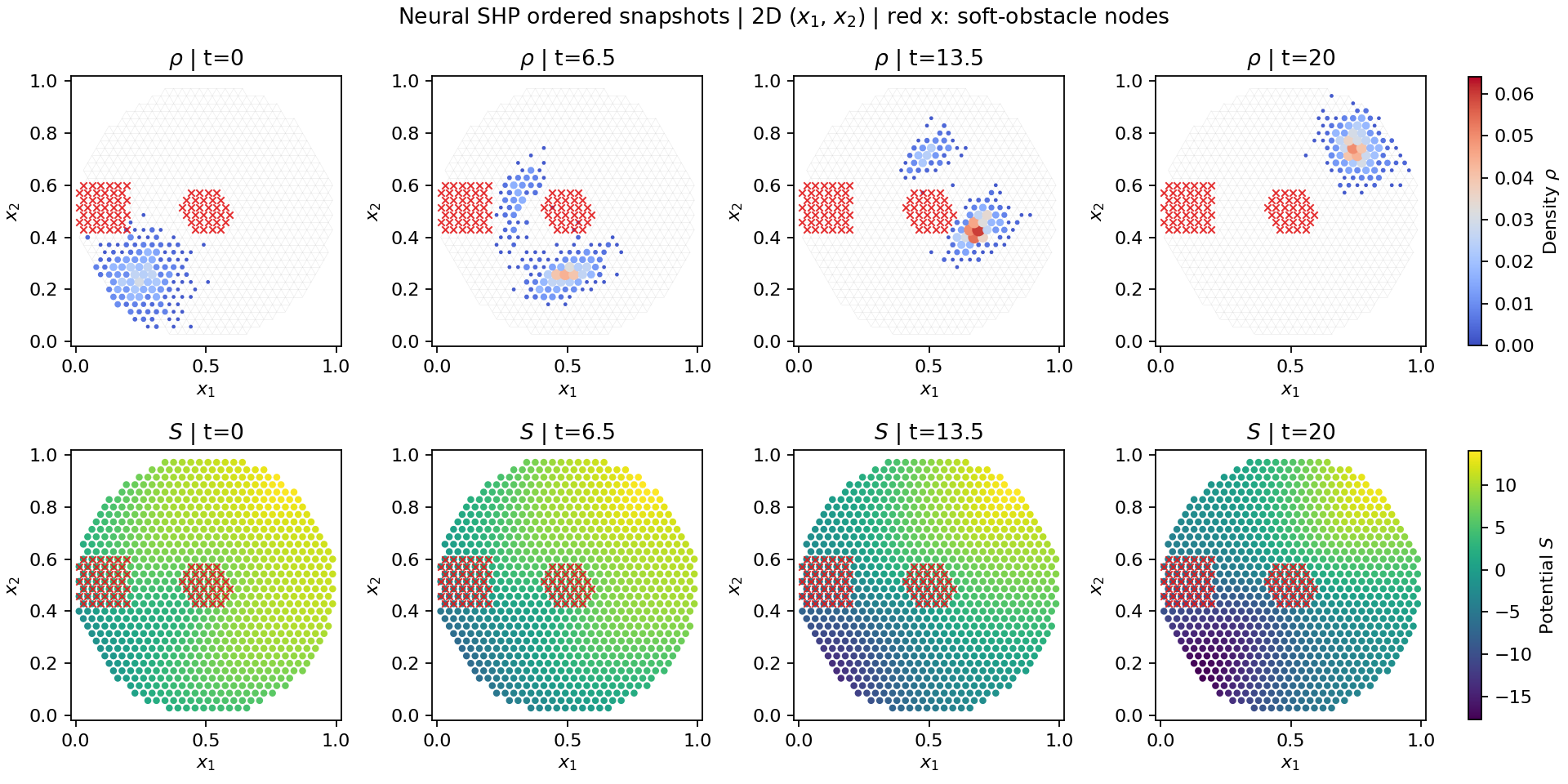}
 \subcaption{Soft obstacle snapshots}\label{fig:re-linear-soft}
 \end{minipage}
 }
 \end{minipage}
 \caption{
  Snapshots of the particle density dynamics $\rho$ (upper panel) and the learned potential function $S$ (lower panel) evaluated at time steps $t = 0, 6.5, 13.5,$ and $20$. Marker areas are scaled proportionally to nodal density. Panel (a) illustrates the hard obstacle scenario, while panel (b) depicts the soft obstacle scenario.
  }\label{fig:re-linear}
\end{figure*}

\subsection{Graph-distance transport on real networks}
\label{sec:real-graph-distance-transport}

The following real-world experiments are formulated as target-set transports from a source set $A \subseteq V$ to a target set $B \subseteq V$. For undirected edges $\{u,v\} \in E$ with positive route lengths $r_{uv} = r_{vu}$, the distance from node $i$ to $B$ is defined as $\operatorname{dist}(i,B) := \min_{b\in B} \operatorname{dist}(i,b)$, where $\operatorname{dist}(i,b) := \min_{\gamma:i\leadsto b} \sum_{\{u,v\}\in\gamma} r_{uv}$ is the shortest-path distance to a specific node $b$. We set $r_{uv}=1$ for unweighted graph-hop distances and use physical lengths for road networks.

Particles are initialized uniformly over the source set: $\rho_i(0) = \frac{\mathbb{I}_{\{i\in A\}}}{\vert{}A\vert{}}$. We define a normalized distance $G_B(i) = \frac{\operatorname{dist}(i,B)}{D_B}$ using a case-specific scale $D_B > 0$. 

In these real-world graph experiments, we use $N=500$ particles for training and $N_\text{eval}=10{,}000$ particles for evaluations. The terminal cost functional $\mathcal{G}_B(\rho(T)) = \sum_{i\in V} \rho_i(T) G_B(i)$ is empirically estimated with $N_{\mathrm{eval}}$ particles as:$$\widehat{\mathcal{G}}_B = \frac{1}{N_{\mathrm{eval}}} \sum_{l=1}^{N_{\mathrm{eval}}} G_B(X^l(T)).$$Transport success is quantified at $T$ by the target mass $$m_B(T) = \frac{1}{N_{\mathrm{eval}}} \sum_{l=1}^{N_{\mathrm{eval}}} \mathbb{I}_{\{X^l(T)\in B\}}$$ and the mean remaining distance to $B$ is ${\mathbb{E}}[\operatorname{dist}(X(T),B)] = D_B \widehat{\mathcal{G}}_B$. 

Noted that the metrics $\widehat {\mathcal K}_{\Delta t}, \widehat {\mathcal G}_B, \widehat {\mathcal J}^*$, ${\mathbb{E}}[\operatorname{dist}(X(T),B)]$ and $m_B(T)$ are estimated by the $N_\text{eval}$ particles.

\subsubsection{Topic-directed search on the PubMed Diabetes citation graph}
\label{sec:pubmed-navigation}

We evaluate our approach on the PubMed Diabetes benchmark subset \cite{sen2008collective,yang2016planetoid}, a citation network where nodes represent diabetes-related publications and edges denote symmetrized citation links. The connected graph comprises $|V|=19{,}717$ nodes and $|E|=44{,}324$ undirected edges, partitioned into three topical classes: Experimental ($4103$ nodes), Type~2 ($7739$ nodes), and Type~1 Diabetes ($7875$ nodes).

Following the graph-distance transport formulation of \cref{sec:real-graph-distance-transport}, we designate the Experimental and Type~2 classes as the source set $A$ and target set $B$, respectively. With unit edge weights ($r_{ij}=1$) and a maximum distance threshold $D_B=9$, the remaining distance $\operatorname{dist}(i,B)$ for any $i \in V$ is evaluated in graph hops. Each particle trajectory represents an idealized literature search, governed by a terminal coefficient $\lambda_G=50$. For computational scalability, the discrete nodes $s$ are embedded into $\mathbb{R}^{32}$. The model is trained over $3000$ epochs using $M=40$ time steps and the time step $\Delta t=0.1$.


\begin{table}[htbp]
 \centering
 \small
 \resizebox{0.98\linewidth}{!}{%
 \begin{tabular}{cccccccccc}
 \toprule
 $|V|$ & $|E|$ & $M$ &
 $\Delta t$ & $m_B(T)$ &
 ${\mathbb{E}}[\operatorname{dist}(X(T),B)]$ & $\widehat{\mathcal K}_{\Delta t}$ &
 $\widehat{\mathcal G}_B$ &
 $\widehat{\mathcal J}^*$ \\
 \midrule
 19,717 & 44,324 & 40 & 0.1 & 0.9555 & 0.1038 & 1.5035 & 0.0115 & 1.3284 \\
 \bottomrule
 \end{tabular}%
 }
 \caption{Evaluation results of the Neural SHP algorithm on the PubMed citation graph. }
 \label{tab:pubmed-shp-navigation}
\end{table}

\Cref{tab:pubmed-shp-navigation} summarizes the results evaluated at the optimal epoch. Over the transport time horizon, the target mass $m_B(T)$ successfully increases to $0.9555$, while ${\mathbb{E}}[\operatorname{dist}(X(T),B)]$ is reduced to $0.1038$ hops.


\begin{figure}[htbp]
 \centering
 \begin{minipage}[b]{0.99\textwidth} 
 {
 \begin{minipage}[b]{0.65\linewidth} 
 \centering
 \includegraphics[width=\linewidth]{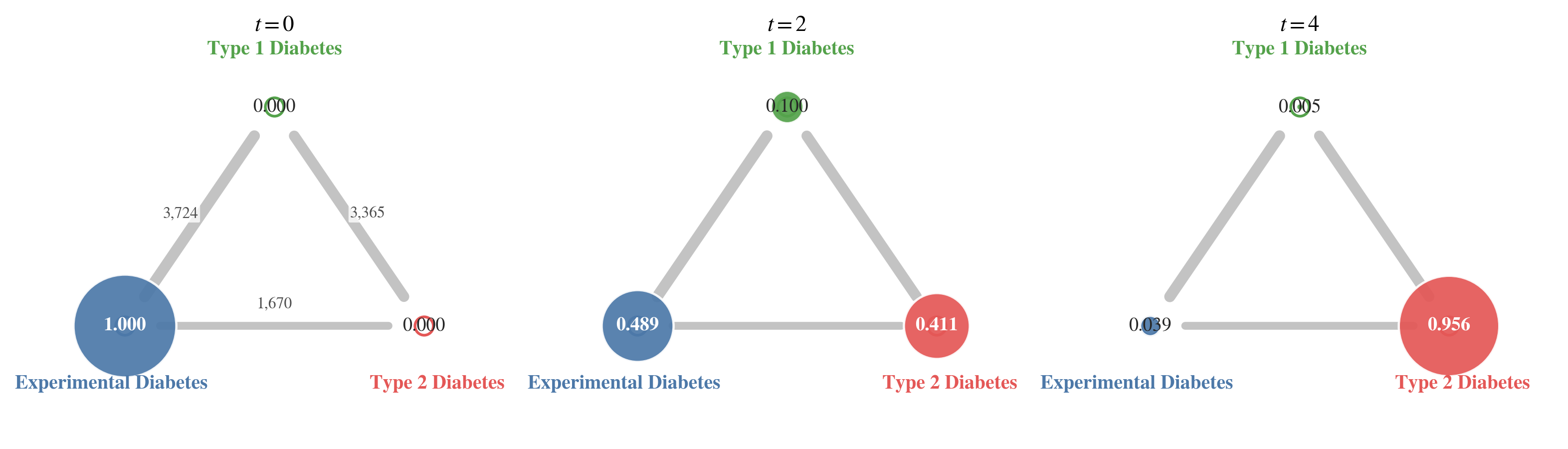}
 \subcaption{Quotient-graph summaries at $t=0,2,4$}\label{subfig:pubmed-quotient-snapshots}
 \end{minipage}
 }
 {
 \begin{minipage}[b]{0.32\linewidth}
 \centering
 \includegraphics[width=\linewidth]{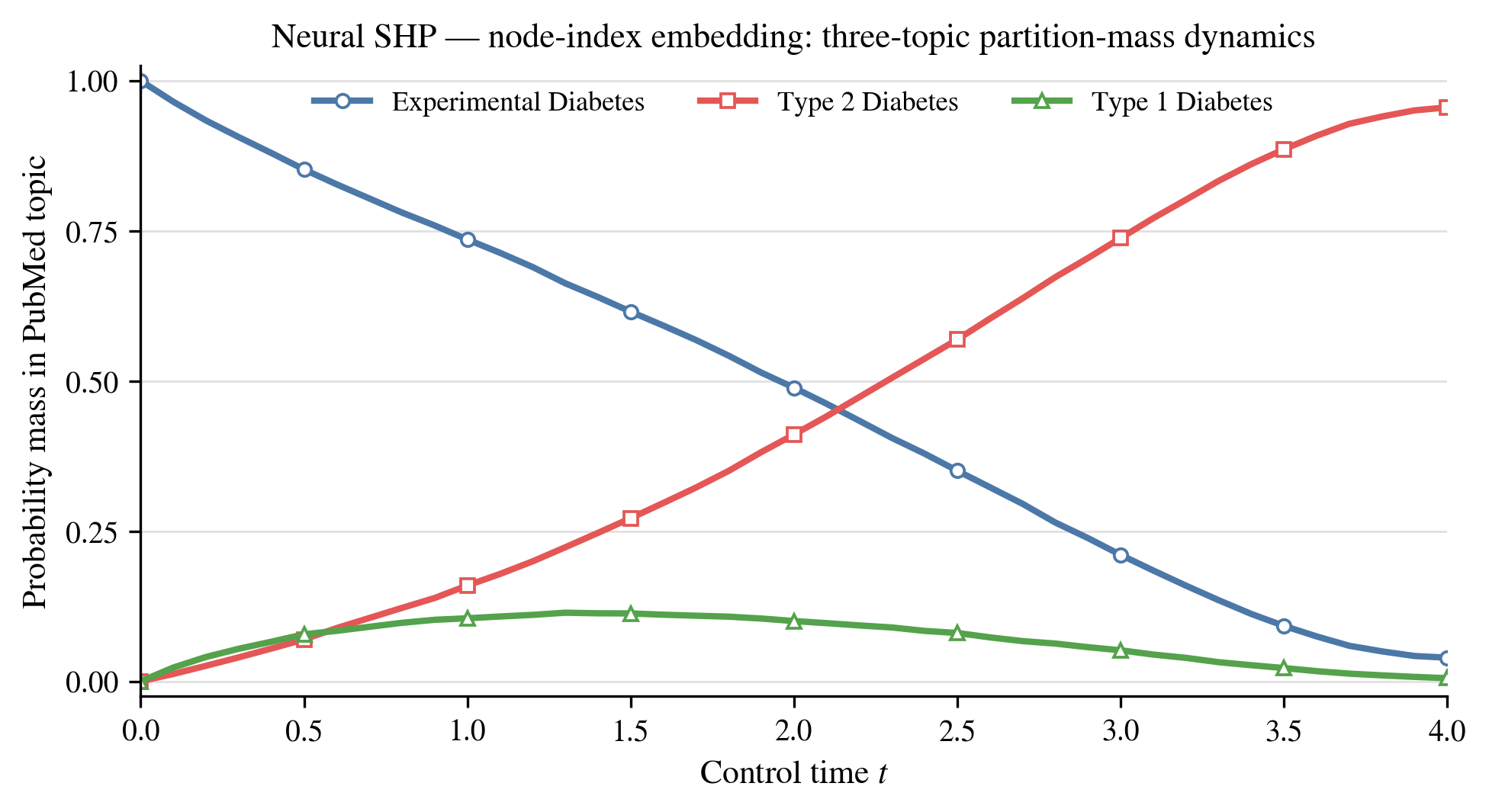}
 \subcaption{Topic dynamics}\label{subfig:pubmed-topic-mass}
 \end{minipage}
 }
 \end{minipage}\caption{Macroscopic transport dynamics on the PubMed citation graph, aggregated over the three fundamental diabetes topics. Panel (a) illustrates snapshots of the quotient-graph flow at times $t=0, 2, 4$, while Panel (b) details the continuous evolution of topic-density distribution over the complete time horizon. }\label{fig:graph-partition-transport-pubmed}
\end{figure}

Figure~\ref{fig:graph-partition-transport-pubmed} summarizes the transport at the topic-level graph partition. Panel~\cref{subfig:pubmed-quotient-snapshots} shows three quotient-graph snapshots,
while panel \cref{subfig:pubmed-topic-mass} shows the corresponding topic-mass dynamics. The particles starting uniformly from the Experimental Diabetes papers are transported to the Type~2 Diabetes papers, with a small fraction of particles remaining in the Type~1 Diabetes papers. The dynamics are computed on the full graph visualized in \cref{fig:pubmed-shp-navigation}, and the quotient graph visualized in \cref{subfig:pubmed-quotient-snapshots} is the partited graph corresponding to the topics. Marker area is proportional to nodal density. 

\begin{figure}[htbp]
 \centering
 \includegraphics[width=\linewidth]{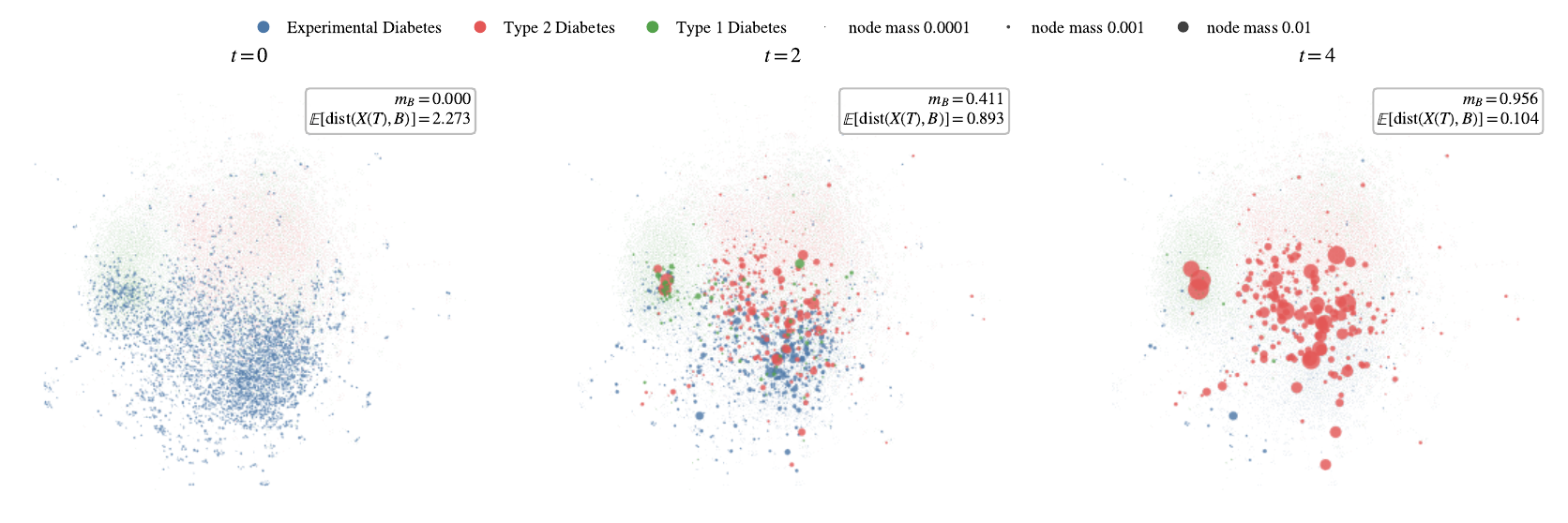}
 \caption{Evolution of Neural SHP on the processed PubMed citation graph from the Experimental Diabetes class to the Type~2 Diabetes class at intermediate control times $t=0, 2, 4$. The node positions are used only for visualization. }
 \label{fig:pubmed-shp-navigation}
\end{figure}

\subsubsection{Road routing from Atlanta to Athens on the graph}
\label{sec:georgia-atlanta-athens-routing}

We evaluate the  origin-destination (OD) task on the undirected Georgia TIGER road graph
\cite{dimacs2006challenge9}. Its largest connected component has
$731{,}954$ nodes and $852{,}705$ undirected edges, average degree $2.3299$, and maximum degree $d_\text{max}=7$. 
The overview in~\cref{subfig:georgia-road-overview} is the graph of Georgia road-network, while particle dynamics is shown on~\cref{subfig:georgia-snapshots}.

Using the notations of \cref{sec:real-graph-distance-transport}, let $a$ and $b$ be the starting node of all the particles at Atlanta and the destination node at Athens. Here $A=\{a\}$, $B=\{b\}$, $r_{ij}$ is the road length in kilometers, and $$
D_B=L^*=\operatorname{dist}(a,B)=107.9694\ \mathrm{km}.
$$ The reference shortest path has $489$ edges. 

For this experiment, let $\tau=t/T\in[0,1]$ denote the normalized time, and let $s_i\in[0,1]^2$ denote the normalized two-dimensional map coordinate of node $i$. 
We set $\lambda_K=0.5$ and $\lambda_G=3000$. The function $G_B(i)=\operatorname{dist}(i,B)/L^\ast$ is the normalized weighted shortest-path distance from node $i$ to the destination $B$. 

The experimental settings are $M=1500$, $\Delta t=0.2$, and learning rate $10^{-4}$. 
The experiments ran for a maximum of $10{,}000$ epochs to illustrate the training dynamics.

\begin{remark}[Terminal-Fixed Variant of the Potential Network]
For the long-range Georgia routing problem, we employ a variant of the potential network, $S^\theta$  incorporating a terminal-fixed structure to enforce the terminal condition exactly. We parameterize the potential as
\begin{equation}
S^\theta(s_i,t)
=
-\tau\frac{\lambda_G}{\lambda_K} G_B(i)
+(1-\tau)S^\theta_R(s_i,\tau).
\end{equation}
Here $S^\theta_R$ is a residual potential network with the same feed-forward architecture as the unconstrained potential network, taking $s_i$ and $\tau$ as inputs. Consequently, the residual vanishes at $t=T$ and
$
S^\theta(s_i,T)=-\frac{\lambda_G}{\lambda_K} G_B(i).
$ This architectural modification is used exclusively in this case to simplify the training process.
\end{remark}

\begin{table}[htbp]
 \centering
 \small
 \resizebox{\linewidth}{!}{%
 \begin{tabular}{lcccccccc}
  \toprule
  $|V|$ & $|E|$ & $M$ & $\Delta t$ & $m_B(T)$ &
  ${\mathbb{E}}[\operatorname{dist}(X(T),B)]$ (km) &
  $\widehat{\mathcal K}_{\Delta t}$ &
  $\widehat{\mathcal G}_B$ &
  $\widehat{\mathcal J}^*$ \\
  \midrule
  $731{,}954$ & $852{,}705$ &1500 &0.2 & 0.8902 & 1.0287 & 3204.8796 & 0.009528& 1631.0231 \\
  \bottomrule
 \end{tabular}%
 }
 \caption{Neural SHP on the Georgia road-network. The terms are measured at the lowest $\hat {\mathcal J}^*$.} \label{tab:georgia-atlanta-athens-routing}
\end{table}

\Cref{tab:georgia-atlanta-athens-routing} summarizes the results. The target mass reach to $m_B(T)=0.8902$ at epoch 10{,}000 and ${\mathbb{E}}[\operatorname{dist}(X(T),B)]=1.0287$ km. The evaluated objective is $\widehat{\mathcal J}^*=1631.0231.$
Before training, the same terminal-conditioned initialization gives $m_B(T)=0.7580$, ${\mathbb{E}}[\operatorname{dist}(X(T),B)]=2.2014$ km, and $\widehat{\mathcal J}=3554.6155$. The terminal functional minimizes expected remaining distance.

\Cref{fig:georgia-atlanta-athens} are the visualization results of the OD task by Neural SHP on the Georgia road-network. \Cref{subfig:georgia-road-overview} shows the Georgia road-network with the reference shortest path from Atlanta to Athens. \Cref{subfig:georgia-snapshots} shows the result snapshots at $t=0,150,300$. The marked area shows empirical nodal density, and color shows clipped road-distance progress $\min\{1,\max\{0,1-\operatorname{dist}(i,B)/L^*\}\}$.

\begin{figure*}[htbp] 
 \centering
 \begin{minipage}[b]{0.99\textwidth} 
 {
 \begin{minipage}[b]{0.23\linewidth} 
 \centering
 \includegraphics[width=\linewidth]{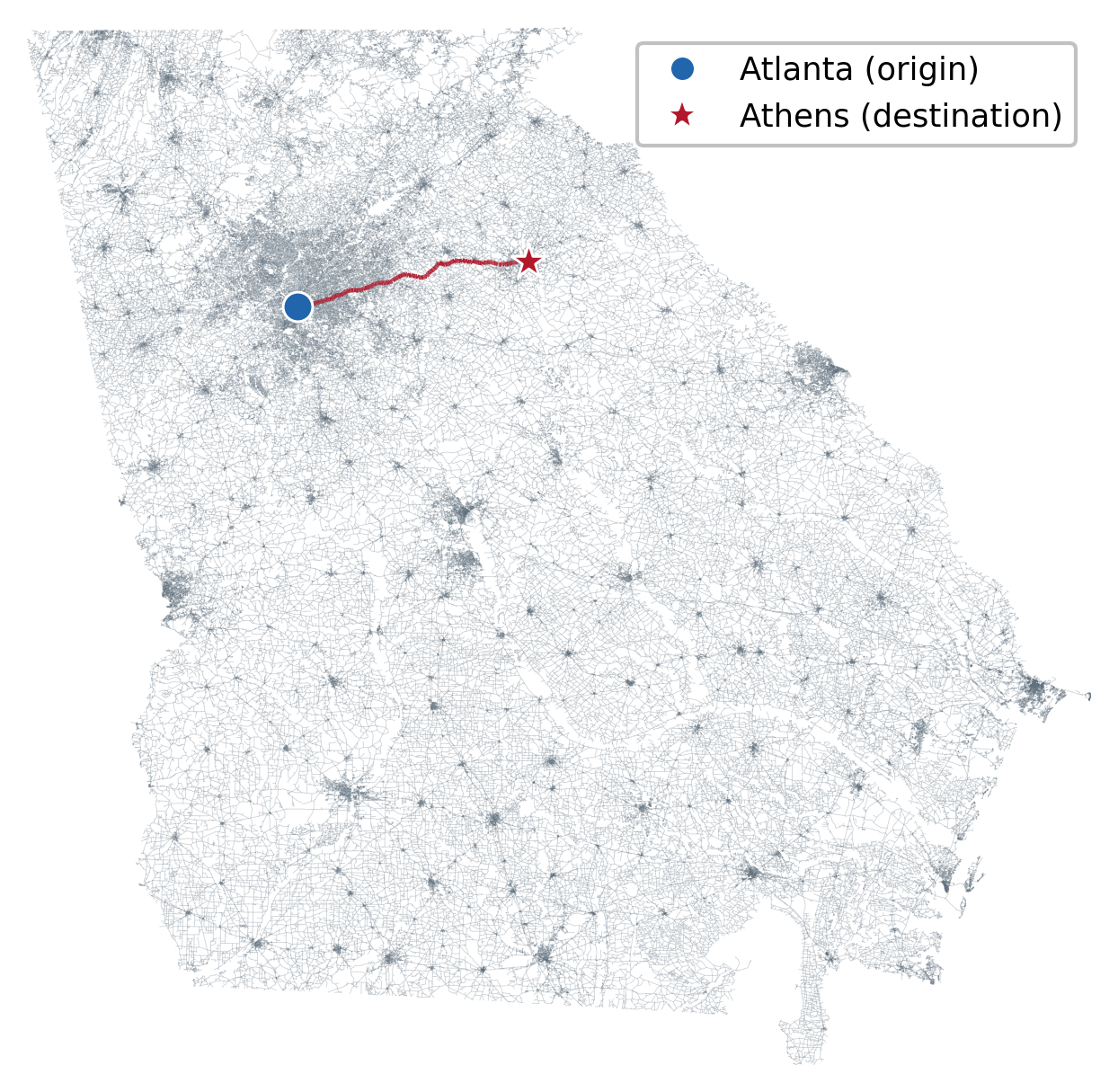}
 \subcaption{Road-network.}\label{subfig:georgia-road-overview}
 \end{minipage}
 }
 {
 \begin{minipage}[b]{0.75\linewidth}
 \centering
 \includegraphics[width=\linewidth]{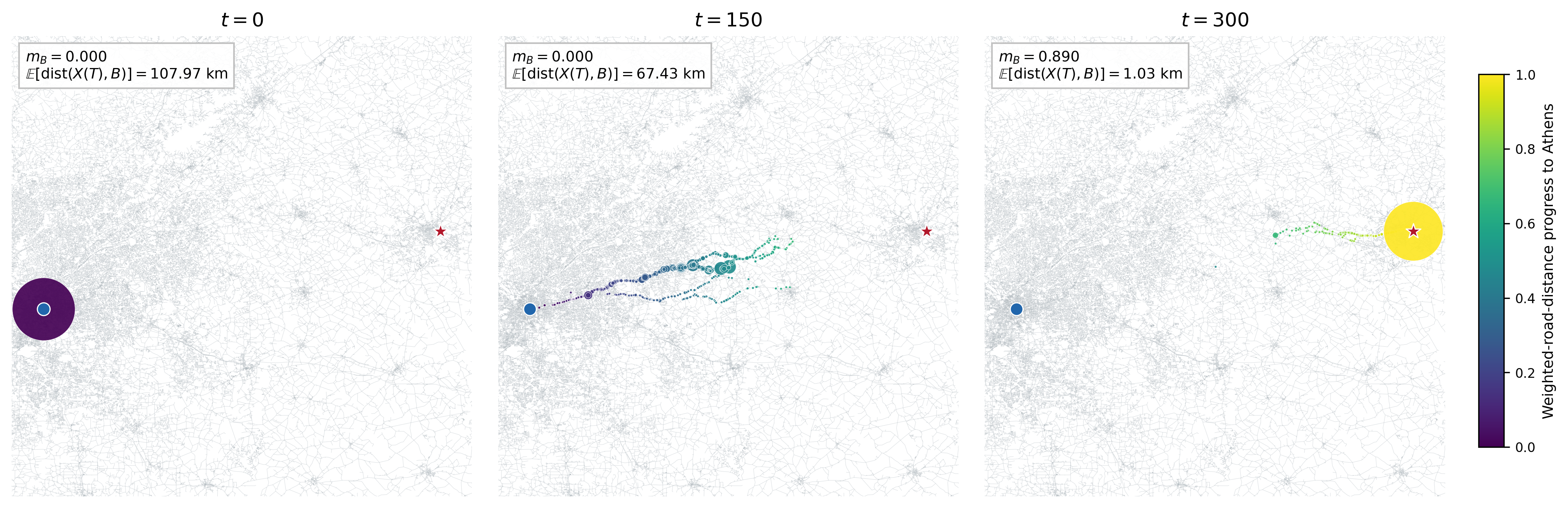}
 \subcaption{Snapshots in $t=0,150,300$}\label{subfig:georgia-snapshots}
 \end{minipage}
 }
 \end{minipage}
 \caption{Panel (a) is the Georgia road-network with the reference shortest path. Panel (b) shows the Neural SHP results on the Georgia road-network at $t=0,150,300$. 
 }\label{fig:georgia-atlanta-athens}
\end{figure*}

Neural WHF is not run on this graph. In the current full-density implementation, differentiating through $M=1500$ steps would require more than 
$562.14$ GB. This exceeds the $16$ GB memory service on which the Neural SHP is tested on.

\section*{Acknowledgments}
This work follows up YXF's previous work during internship under the supervision of Professor Zhou, in the School of Mathematics, Georgia Institute of Technology. YXF and YX was partially supported by the Project of Hetao Shenzhen-HKUST Innovation Cooperation Zone HZQB-KCZYB-2020083.

\bibliographystyle{siamplain}
\bibliography{references}

\end{document}